\definecolor{darkgreen}{rgb}{0,.5,0}
\numberwithin{equation}{section}
\newtheorem{theorem}{Theorem}[section]
\newtheorem{corollary}[theorem]{Crolorrary}
\newtheorem{lemma}{Lemma}[section]
\newtheorem*{rem*}{Remark}
\newtheorem{remark}{Remark}[section]
\begin{document}
%\begin{CJK}{Bg5}{ttkai}
%%%%%%%%%%%%%%%%%%%%%%%%%%%%%%%%%%%%%%%%%%%%%%
\footnotetext{
\emph{2010 Mathematics Subject Classification.} Primary 42B20, 47B07; Secondary: 42B25,47G99.

\emph{Key words and phrases.} Boundedness; Calder\'{o}n-Zygmund operators; Characterization; Commutators; Weighted {\rm BMO} space.}

\title[]{Characterizations of weighted BMO space and its application}

\author[]{Dinghuai Wang, Jiang Zhou$^\ast$ and Zhidong Teng}
\address{College of Mathematics and System Sciences, Xinjiang University,  Urumqi 830046 \endgraf
         Republic of China}
\email{Wangdh1990@126.com; zhoujiang@xju.edu.cn; zhidong1960@163.com}
\thanks{The research was supported by National Natural Science Foundation
of China (Grant No.11661075 and No. 11271312). \\ \qquad * Corresponding author, zhoujiang@xju.edu.cn.}

%%%%%%%%%%%%%%%%%%%%%%%%%%%%%%%%%%%%%%%%%%%%%%
\begin{abstract}
In this paper, we prove that the weighted BMO space as follows
$${\rm BMO}^{p}(\omega)=\Big\{f\in L^{1}_{\rm loc}:\sup_{Q}\|\chi_{Q}\|^{-1}_{L^{p}(\omega)}\big\|(f-f_{Q})\omega^{-1}\chi_{Q}\big\|_{L^{p}(\omega)}<\infty\Big\}$$
is independent of the scale $p\in (0,\infty)$ in sense of norm when $\omega\in A_{1}$. Moreover, we can replace $L^{p}(\omega)$ by $L^{p,\infty}(\omega)$. As an application, we characterize this space by the boundedness of the bilinear commutators $[b,T]_{j} (j=1,2)$, generated by the bilinear convolution type Calder\'{o}n-Zygmund operators and the symbol $b$, from $L^{p_{1}}(\omega)\times L^{p_{2}}(\omega)$ to $L^{p}(\omega^{1-p})$ with $1<p_{1},p_{2}<\infty$, $1/p=1/p_{1}+1/p_{2}$ and $\omega\in A_{1}$. Thus we answer the open problem proposed in \cite{C} affirmatively.
\end{abstract}
\maketitle

%%%%%%%%%%%%%%%%%%%%%%%%%%%%%%%%%%%%%%%%%%%%%%
%%%%%%%%%%%%%%%%%%%%%%%%%%%%%%%%%%%%%%%%%%%%%%
\maketitle

%%%%%%%%%%%%%%%%%%%%%%%%%%%%%%%%%%%%%%%%%%%%%%
%%%%%%%%%%%%%%%%%%%%%%%%%%%%%%%%%%%%%%%%%%%%%%
\section{Introduction}
A locally integrable function $f$ is said to belong to \rm{BMO} space if there exists a constant
$C > 0$ such that for any cube $Q\subset \mathbb{R}^n$,
$$\frac{1}{|Q|}\int_{Q}|f(x)-f_{Q}|dx\leq C,$$
where $f_{Q}=\frac{1}{|Q|}\int_{Q}f(x)dx$ and the minimal constant $C$ is defined by $\|f\|_{*}$.

There are a number of classical results that demonstrate ${\rm BMO}$ functions are the right collections to do harmonic analysis on the boundedness of commutators. A well known result of Coifman, Rochberg and Weiss \cite{CRW} states that the commutator
$$[b,T](f)=bT(f)-T(bf)$$
is bounded on some $L^p$, $1<p<\infty$, if and only if $b\in \mathrm{BMO}$, where $T$ is the Hilbert transform.  Janson extended the result in \cite{J} via the commutators of Calder\'{o}n-Zygmund operators with smooth homogeneous kernels; Chanillo in \cite{Ch} did the same for commutators of the fractional integral operator with the restriction that $n-\alpha$ be an even integer. The theory was then extended and generalized to several directions. For instance, Bloom \cite{B} investigated the same result in the weighted setting; Uchiyama \cite{U} extended the boundednss results on the commutator to compactness; Krantz and Li in \cite{KL1} and \cite{KL2} have applied commutator theory to give a compactness characterization of Hankel operators on holomorphic Hardy spaces $H^{2}(D)$, where $D$ is a bounded, strictly pseudoconvex domain in $\mathbb{C}^n$. It is perhaps for this important reason that the boundedness of $[b, T]$ attracted one¡¯s attention among researchers in PDEs.

Recently, Chaffee \cite{C} considered the multilinear setting and proved that for $0\leq \alpha<2n$, $1<p_{1},p_{2}<\infty$ and
$$\frac{1}{p_{1}}+\frac{1}{p_{2}}-\frac{\alpha}{n}=\frac{1}{q},$$
if $q>1$, then
\begin{equation}\label{1}
[b,T]_{j}: L^{p_{1}}\times L^{p_{2}}\rightarrow L^{q}\Leftrightarrow b\in {\rm BMO}
\end{equation}
for $j=1,2$, where $T$ is a bilinear operator of convolution type with a homogeneous kernel of degree $-2n+\alpha$. In his proof required the use of H\"{o}lder inequality with $q$ and $q'$, the exponent $q$ must be larger than $1$. Thus, he asked
\vspace{0.3cm}

\textbf{Problem 1.} If $\frac{1}{2}<q<1$ and $[b,T]_{j}$ is a bounded operator from $L^{p_{1}}\times L^{p_{2}}$ to $L^{q}$, is $b$ in ${\rm BMO}$ space?

At the same time, Wang, Jiang and Pan \cite{WJP} obtain the similar result as \eqref{1} for bilinear fractional integral operator, they also asked
\vspace{0.3cm}

\textbf{Problem 2.} If $\vec{b}=(b_{1},b_{2})$ and $[\Pi\vec{b},I_{\alpha}]$ is a bounded operator from $L^{p_{1}}\times L^{p_{2}}$ to $L^{q}$, is $\vec{b} \in {\rm BMO}\times {\rm BMO}$?

In this paper, we will give an answer of Problem 1 and show that the answer of Problem 2 is affirmative for the case ${\vec{b}=(b,b)}$, using the following Theorem \ref{main1} and Theorem \ref{main2}. We focus on proving the case $\alpha=0$. For $0<\alpha<2n$, a similar arguments are applied with necessary modifications, one can obtain the desired result. Moreover, we extend the result to weighted case. To state our result, we first give the following denotations.

We recall the definition of $A_{p}$ weight introduced by Muckenhoupt \cite{M}. For $1< p<\infty$ and a nonnegative locally integrable function $\omega$ on $\mathbb{R}^n$, $\omega$ is in the
Muckenhoupt $A_{p}$ class if it satisfies the condition
$$[\omega]_{A_{p}}:=\sup_{Q}\bigg(\frac{1}{|Q|}\int_{Q}\omega(x)dx\bigg)\bigg(\frac{1}{|Q|}\int_{Q}\omega(x)^{-\frac{1}{p-1}}dx\bigg)^{p-1}<\infty.$$
And a weight function $\omega$ belongs to the class $A_{1}$ if
$$[\omega]_{A_{1}}:=\frac{1}{|Q|}\int_{Q}\omega(x)dx\Big(\mathop\mathrm{ess~sup}_{x\in Q}\omega(x)^{-1}\Big)<\infty.$$
We write $A_{\infty}=\bigcup_{1\leq p<\infty}A_{p}$.

Let $\omega\in A_{\infty}$ and $p\in (0,\infty)$. We let $L^{p}(\omega)$ be the space of all measurable functions $f$ such that
$$\|f\|_{L^{p}(\omega)}:=\bigg(\int_{\mathbb{R}^n}|f(x)|^{p}\omega(x)dx\bigg)^{1/p}<\infty.$$

Let $0< p<\infty$. Given a nonnegative locally integrable function $\omega$, the weighted $\mathrm{BMO}$ space $\mathrm{BMO}^{p}(\omega)$ is defined by the set of all functions $f\in L^{1}_{\mathrm{loc}}(\mathbb{R}^{n})$ such that
\begin{eqnarray*}
\|f\|_{{\rm BMO}^{p}(\omega)}&=&\sup_{Q}\bigg(\frac{1}{\omega(Q)}\int_{Q}|f(y)-f_{Q}|^{p}\omega(y)^{1-p}dy\bigg)^{1/p}\\
&=&\sup_{Q}\frac{1}{\|\chi_{Q}\|_{L^{p}(\omega)}}\big\|\frac{(f-f_{Q})\chi_{Q}}{\omega}\big\|_{L^{p}(\omega)}<\infty,
\end{eqnarray*}
where $\omega(Q)=\int_{Q}\omega(x)dx$. We write $\mathrm{BMO}^{1}(\omega)=\mathrm{BMO}(\omega)$ simple. In \cite{G}, Garc\'{i}a-Cuerva proved that if $\omega\in A_{1}$, $\mathrm{BMO}(\omega)=\mathrm{BMO}^{p}(\omega)$ for $1< p<\infty$ with equivalence of the corresponding norms.
\vspace{0.3cm}

\textbf{Problem 3.} Let $X$ be a quasi-Banach function space and $\omega\in A_{1}$. Is the norm $\|f\|_{\mathrm{BMO}(\omega)}$ equivalent to
$$\|f\|_{{\rm BMO}_{X}(\omega)}=\sup_{Q}\frac{1}{\|\chi_{Q}\|_{X}}\big\|\frac{(f-f_{Q})\chi_{Q}}{\omega}\big\|_{X}<\infty?$$
The aim of this paper is to show that the answer of Problem 3 is affirmative for $X=L^{p,\infty}(\omega)(1<p<\infty)$ and $X=L^{r}(\omega)(0<r<1)$ .

\begin{theorem}\label{main1}
Let $\omega\in A_{1}$ and $X=L^{p,\infty}(\omega)$ with $1<p<\infty$. Then
$${\rm BMO}(\omega)={\rm BMO}_{X}(\omega)$$
with equivalence of the corresponding norms.
\end{theorem}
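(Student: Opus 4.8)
The plan is to establish the norm equivalence by proving the two continuous inclusions separately, exploiting the fact that $L^{p,\infty}(\omega)$ sits between $L^{p}(\omega)$ and $L^{1}$ on sets of finite $\omega$-measure. A preliminary computation I would record is that the weak and strong norms of the defining characteristic function agree: since the distribution function of $\chi_{Q}$ equals $\omega(Q)$ for $0<\lambda<1$ and $0$ otherwise, one has $\|\chi_{Q}\|_{L^{p,\infty}(\omega)}=\omega(Q)^{1/p}=\|\chi_{Q}\|_{L^{p}(\omega)}$, so the normalizing factor in $\|f\|_{{\rm BMO}_{X}(\omega)}$ is exactly the one appearing in Garc\'{i}a-Cuerva's $\mathrm{BMO}^{p}(\omega)$ norm.

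For the inclusion $\mathrm{BMO}(\omega)\subseteq\mathrm{BMO}_{X}(\omega)$ I would simply combine two facts. First, the elementary Chebyshev inequality gives $\|g\|_{L^{p,\infty}(\omega)}\le\|g\|_{L^{p}(\omega)}$ for every measurable $g$, whence $\|f\|_{{\rm BMO}_{X}(\omega)}\le\|f\|_{\mathrm{BMO}^{p}(\omega)}$ termwise in $Q$. Second, by the result of Garc\'{i}a-Cuerva quoted in the introduction, $\|f\|_{\mathrm{BMO}^{p}(\omega)}\approx\|f\|_{\mathrm{BMO}(\omega)}$ for $\omega\in A_{1}$ and $1<p<\infty$. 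Chaining these yields $\|f\|_{{\rm BMO}_{X}(\omega)}\lesssim\|f\|_{\mathrm{BMO}(\omega)}$; this is the easy half, and it is the only place where the hypothesis $\omega\in A_{1}$ is genuinely used.

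The substantive direction is $\mathrm{BMO}_{X}(\omega)\subseteq\mathrm{BMO}(\omega)$, and here I would argue by the layer-cake (distribution function) formula against the measure $d\mu=\omega\,dx$. Write $M=\|f\|_{{\rm BMO}_{X}(\omega)}$, fix a cube $Q$, and set $h=|f-f_{Q}|\,\omega^{-1}$. The defining weak bound reads $\mu(\{x\in Q: h(x)>\lambda\})\le (M/\lambda)^{p}\,\omega(Q)$ for all $\lambda>0$, while trivially $\mu(\{h>\lambda\})\le\omega(Q)$. Since $\int_{Q}|f-f_{Q}|\,dx=\int_{Q}h\,d\mu=\int_{0}^{\infty}\mu(\{x\in Q:h>\lambda\})\,d\lambda$, I would split the $\lambda$-integral at a threshold $\lambda_{0}$, bound the lower part by $\lambda_{0}\,\omega(Q)$ and the tail by $\frac{M^{p}}{p-1}\omega(Q)\,\lambda_{0}^{1-p}$, and then optimize by taking $\lambda_{0}=M$. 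This produces $\frac{1}{\omega(Q)}\int_{Q}|f-f_{Q}|\,dx\le\frac{p}{p-1}M$, and taking the supremum over $Q$ gives $\|f\|_{\mathrm{BMO}(\omega)}\le\frac{p}{p-1}\|f\|_{{\rm BMO}_{X}(\omega)}$. Note this half is purely measure-theoretic and uses nothing about $\omega$ beyond $\omega(Q)<\infty$.

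I expect the only delicate point to be the convergence of the tail integral $\int_{\lambda_{0}}^{\infty}\lambda^{-p}\,d\lambda$, which is precisely where the hypothesis $p>1$ is indispensable: this is the familiar failure of the embedding $L^{p,\infty}\hookrightarrow L^{1}$ at the endpoint $p=1$, so the argument cannot be pushed to $p=1$. Beyond that, the proof is a clean two-sided estimate, and combining the two inequalities yields the claimed equivalence $\mathrm{BMO}(\omega)=\mathrm{BMO}_{X}(\omega)$, with the explicit bounds $\frac{p-1}{p}\|f\|_{\mathrm{BMO}(\omega)}\le\|f\|_{{\rm BMO}_{X}(\omega)}\lesssim\|f\|_{\mathrm{BMO}(\omega)}$.
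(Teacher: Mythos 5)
Your proposal is correct and follows essentially the same route as the paper: the easy inclusion via Chebyshev plus Garc\'{i}a-Cuerva's equivalence $\mathrm{BMO}^{p}(\omega)=\mathrm{BMO}(\omega)$ for $\omega\in A_{1}$, and the substantive inclusion via the layer-cake splitting of $\int_{0}^{\infty}\mu(\{h>\lambda\})\,d\lambda$ at a threshold comparable to $\|f\|_{{\rm BMO}_{X}(\omega)}$. The only cosmetic difference is that you run the distribution-function argument directly at exponent $1$ (getting the explicit constant $p/(p-1)$), whereas the paper's Lemma 2.1 runs it at an intermediate exponent $q_{1}\in(1,p)$ and then passes to exponent $1$ by H\"{o}lder.
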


\begin{theorem}\label{main2}
Let $\omega\in A_{1}$ and $0<r<1$. Then
$${\rm BMO}(\omega)={\rm BMO}^{r}(\omega)$$
with equivalence of the corresponding norms.
\end{theorem}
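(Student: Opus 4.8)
The plan is to prove the two inclusions separately; the inclusion ${\rm BMO}(\omega)\subseteq{\rm BMO}^{r}(\omega)$ is elementary and the reverse one carries all the content. For a cube $Q$ write $\mu=\omega\,dy$, $\omega(Q)=\mu(Q)$, $\omega_{Q}=|Q|^{-1}\omega(Q)$, and set $h_{Q}=|f-f_{Q}|\,\omega^{-1}$. With respect to the probability measure $d\mu_{Q}=\omega\,dy/\omega(Q)$ on $Q$ one has
\[
\Big(\tfrac1{\omega(Q)}\int_{Q}|f-f_{Q}|^{r}\omega^{1-r}\,dy\Big)^{1/r}=\|h_{Q}\|_{L^{r}(\mu_{Q})},\qquad \tfrac1{\omega(Q)}\int_{Q}|f-f_{Q}|\,dy=\|h_{Q}\|_{L^{1}(\mu_{Q})}.
\]
Since $0<r<1$, Jensen's inequality for the concave function $t\mapsto t^{r}$ gives $\|h_{Q}\|_{L^{r}(\mu_{Q})}\le\|h_{Q}\|_{L^{1}(\mu_{Q})}$ for every $Q$, whence $\|f\|_{{\rm BMO}^{r}(\omega)}\le\|f\|_{{\rm BMO}(\omega)}$; this direction uses no property of $\omega$ beyond $\omega(Q)<\infty$.

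For the reverse inclusion assume, after normalization, that $\|f\|_{{\rm BMO}^{r}(\omega)}=1$ and fix a cube $Q_{0}$; the goal is $\frac1{\omega(Q_{0})}\int_{Q_{0}}|f-f_{Q_{0}}|\,dy\le C$ with $C=C(n,r,[\omega]_{A_{1}})$. First I would use the $A_{1}$ condition in the form $\omega(y)\ge\omega_{Q}/[\omega]_{A_{1}}$ for a.e. $y\in Q$, together with $1-r>0$, to bound $\omega^{1-r}$ from below and thereby remove the weight from the oscillation, obtaining
\[
\Big(\tfrac1{|Q|}\int_{Q}|f-f_{Q}|^{r}\,dy\Big)^{1/r}\le[\omega]_{A_{1}}^{(1-r)/r}\,\omega_{Q}\qquad\text{for every cube }Q.
\]
Thus the weighted hypothesis becomes an \emph{unweighted} $L^{r}$-oscillation bound, but with the varying right-hand side $\omega_{Q}$ rather than a uniform constant.

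Here the key difficulty appears: for $r<1$ a bound on $L^{r}$-averages does not control $L^{1}$-averages cube by cube (H\"older runs the wrong way), so a direct Calder\'on--Zygmund stopping-time argument breaks down. I would circumvent this by passing to the local mean oscillation $\mathcal{O}_{\lambda}(f;Q)=\inf_{c}\big((f-c)\chi_{Q}\big)^{*}(\lambda|Q|)$, where $(\,\cdot\,)^{*}$ is the decreasing rearrangement and $\lambda_{n}\in(0,1)$ is the dimensional constant in Lerner's formula. Chebyshev's inequality applied to the rearrangement turns the displayed bound into $\mathcal{O}_{\lambda_{n}}(f;Q)\le\lambda_{n}^{-1/r}[\omega]_{A_{1}}^{(1-r)/r}\,\omega_{Q}$ for every $Q$. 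I would then invoke Lerner's local mean oscillation inequality: there is a sparse family $\mathcal{S}\subseteq\mathcal{D}(Q_{0})$ with, for a.e. $x\in Q_{0}$,
\[
|f(x)-m_{f}(Q_{0})|\le 2\sum_{Q\in\mathcal{S}}\mathcal{O}_{\lambda_{n}}(f;Q)\,\chi_{Q}(x),
\]
where $m_{f}(Q_{0})$ is a median. Substituting the estimate for $\mathcal{O}_{\lambda_{n}}(f;Q)$ and integrating over $Q_{0}$ reduces everything to controlling $\sum_{Q\in\mathcal{S}}\omega_{Q}|Q|=\sum_{Q\in\mathcal{S}}\omega(Q)$.

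Finally, the sparseness of $\mathcal{S}$ (pairwise disjoint $E_{Q}\subseteq Q$ with $|E_{Q}|\ge\tfrac12|Q|$) combined with $\omega\in A_{1}\subseteq A_{\infty}$ gives $\omega(Q)\le2[\omega]_{A_{1}}\omega(E_{Q})$, so that $\sum_{Q\in\mathcal{S}}\omega(Q)\le2[\omega]_{A_{1}}\sum_{Q\in\mathcal{S}}\omega(E_{Q})\le2[\omega]_{A_{1}}\,\omega(Q_{0})$. Tracing the constants and using $\frac1{|Q_{0}|}\int_{Q_{0}}|f-f_{Q_{0}}|\le\frac2{|Q_{0}|}\int_{Q_{0}}|f-m_{f}(Q_{0})|$ yields $\frac1{\omega(Q_{0})}\int_{Q_{0}}|f-f_{Q_{0}}|\,dy\le C(n,r,[\omega]_{A_{1}})$, which is the desired bound, and taking the supremum over $Q_{0}$ completes the proof. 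The main obstacle is precisely the failure of the naive stopping-time argument at the subcritical exponent $r<1$; the rearrangement/sparse-domination route, fed into the $A_{1}$ summation, is what makes the self-improvement go through.
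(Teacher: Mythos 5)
Your proposal is correct, but it takes a genuinely different route from the paper's. The paper proves the hard inequality $\|f\|_{{\rm BMO}(\omega)}\le C\|f\|_{{\rm BMO}^{r}(\omega)}$ via its Lemma 2.2, a Str\"omberg-type weighted John--Nirenberg argument: an iterated Calder\'on--Zygmund decomposition at level $s$ with respect to the measure $d\mu=\omega\,dx$ shows that $\mu\{x\in Q:|f(x)-c_{Q}|/\omega(x)>t\}\le c_{1}e^{-c_{2}t}\mu(Q)$ under the normalization $\|f\|_{{\rm BMO}_{r}(\omega)}=1$, and the $L^{1}$ bound then follows by integrating this distribution function. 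You instead first use $A_{1}$ to strip the weight from the oscillation, converting the hypothesis into the unweighted bound $\big(|Q|^{-1}\int_{Q}|f-f_{Q}|^{r}\big)^{1/r}\le C\,\omega_{Q}$ with a cube-dependent right-hand side, and then feed this into the Lerner--Hyt\"onen local mean oscillation (sparse) decomposition, closing the argument with the $A_{1}$ packing estimate $\sum_{Q\in\mathcal{S}}\omega(Q)\le 2[\omega]_{A_{1}}\omega(Q_{0})$; all steps check out (the Chebyshev bound on the rearrangement legitimately exploits $r<1$, and the sparse sum is where the self-improvement happens). The trade-off: your argument is shorter and more modular, but it outsources the combinatorial core to Lerner's formula and delivers only the $L^{1}$ conclusion, whereas the paper's iteration yields the stronger exponential-decay estimate, which it reuses in Lemma 2.3 to identify ${\rm BMO}^{r}(\omega)$ with the inf-over-constants space ${\rm BMO}_{r}(\omega)$ (needed elsewhere in the paper). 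Your preliminary reduction to an unweighted oscillation via the $A_{1}$ lower bound on $\omega$ is a genuine simplification that the paper does not exploit. One small caveat: if you quote Lerner's original 2010 formula, an additional local sharp maximal function term appears; either cite the refined version without that term or note that the extra term is also bounded by $C[\omega]_{A_{1}}\omega(x)$ pointwise via your oscillation estimate and $A_{1}$, so it contributes another $C\omega(Q_{0})$ upon integration.
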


\begin{remark}
In the unweighted setting, Str\"{o}mberg in \cite{St} showed that for $0<s\leq \frac{1}{2}$, $p>0$, there exists a constant $C$ such that
$$s^{1/p}\|f\|_{{\rm BMO}^{*}_{s}}\leq \|f\|_{{\rm BMO}^{p}}\leq C\|f\|_{{\rm BMO}_{s}^{*}},$$
where
$$\|f\|_{{\rm BMO}_{s}^{*}}=\sup_{Q}\inf_{c}\inf\big\{t\geq 0:\big|\{x\in Q:|f(x)-c|>t\}\big|<s|Q|\big\}.$$
\end{remark}

Recall that bilinear singular integral operator $T$ is a bounded operator which satisfies
$$\|T(f_{1},f_{2})\|_{L^{p}}\leq C\|f_{1}\|_{L^{p_{1}}}\|f_{2}\|_{L^{p_{2}}},$$
for some $1<p_{1},p_{2}<\infty$ with $1/p=1/p_{1}+1/p_{2}$ and the function $K$, defined off the diagonal $y_{0}=y_{1}=y_{2}$ in $(\mathbb{R}^{n})^{2+1}$, satisfies the conditions as follow:

(1) The function $K$ satisfies the size condition.
$$|K(x,y_{1},y_{2})|
\leq\frac{C}{\big(|x-y_{1}|+|x-y_{2}|\big)^{2n}};$$

(2) The function $K$ satisfies the regularity condition. For some $\gamma>0$,
if $|y_{1}-y'_{1}|\leq \frac{1}{2}\max\{|x-y_{1}|,|x-y_{2}|\}$
$$|K(x,y_{1},y_{2})-K(x',y_{1},y_{2})|
\leq\frac{C|y_{1}-y'_{1}|^{\gamma}}{\big(|x-y_{1}|+|x-y_{2}|\big)^{2n+\gamma}};$$
if $|y_{2}-y'_{2}|\leq \frac{1}{2}\max\{|x-y_{1}|,|x-y_{2}|\}$
$$|K(x,y_{1},y_{2})-K(x',y_{1},y_{2})|
\leq\frac{C|y_{2}-y'_{2}|^{\gamma}}{\big(|x-y_{1}|+|x-y_{2}|\big)^{2n+\gamma}}.$$

Then we say $K$ is a bilinear Calder\'{o}n-Zygmund kernel. If $x\notin {\rm supp} f_{1}\bigcap {\rm supp} f_{2}$, then
$$T(f_{1},f_{2})(x)=\int_{\mathbb{R}^n}\int_{\mathbb{R}^n}K(x,y_{1},y_{2})f_{1}(y_{1}) f_{2}(y_{2})dy_{1}dy_{2}.$$

The linear commutators are defined by
$$[b,T]_{1}(f_{1},f_{2})(x):=b(x)T(f_{1},f_{2})(x)-T(bf_{1},f_{2})(x),$$
and
$$[b,T]_{2}(f_{1},f_{2})(x):=b(x)T(f_{1},f_{2})(x)-T(f_{1},bf_{2})(x).$$

The iterated commutator is defined by
$$[\Pi\vec{b},T](f_{1},f_{2})(x):=[b_{2},[b_{1},T]_{1}]_{2}(f_{1},f_{2})(x).$$

In this paper, we say that an operator is of 'convolution type' if the kernel $K(x,y_{1},y_{2})$ is actually of the form $K(x-y_{1},x-y_{2})$. The applications of Theorem \ref{main1} and Theorem \ref{main2} as follows.
\begin{theorem}\label{t1}
Let $\omega\in A_{1}$, $\vec{b}=(b,b)$ and $T$ be a bilinear convolution type operator defined by
$$T(f_{1},f_{2})(x)=\int_{\mathbb{R}^n}\int_{\mathbb{R}^n}K(x-y_{1},x-y_{2})f_{1}(y_{1})f_{2}(y_{2})dy_{1}dy_{2}$$
for all $x\notin {\rm supp} f_{1}\bigcap {\rm supp} f_{2}$, where $K$ is a bilinear Calder\'{o}n-Zygmund kernel and such that for any cube $Q\subset \mathbb{R}^{2n}$ with $0\notin Q$, the Fourier series of $\frac{1}{K}$ is absolutely convergent. For $1<p_{1},p_{2}<\infty$ with $1/p=1/p_{1}+1/p_{2}$, the following statements are equivalent:
\begin{enumerate}
\item [\rm(a1)] $b\in \mathrm{BMO}(\omega)$;
\item [\rm(a2)] There exists a positive constant $C$ such that for $j=1,2$,
$$\|[b,T]_{j}(f_{1},f_{2})\cdot\omega^{-1}\|_{L^{p}(\omega)}\leq C\|f_{1}\|_{L^{p_{1}}(\omega)}\|f_{2}\|_{L^{p_{2}}(\omega)}.$$
\item [\rm(a3)] There exists a positive constant $C$ such that for $j=1,2$,
$$\|[b,T]_{j}(f_{1},f_{2})\cdot\omega^{-1}\|_{L^{p,\infty}(\omega)}\leq C\|f_{1}\|_{L^{p_{1}}(\omega)}\|f_{2}\|_{L^{p_{2}}(\omega)}.$$
\item [\rm(a4)] There exists a positive constant $C$ such that
$$\|[\Pi\vec{b},T](f_{1},f_{2})\cdot\omega^{-2}\|_{L^{p}(\omega)}\leq C\|f_{1}\|_{L^{p_{1}}(\omega)}\|f_{2}\|_{L^{p_{2}}(\omega)}.$$
\item [\rm(a5)] There exists a positive constant $C$ such that
$$\|[\Pi\vec{b},T](f_{1},f_{2})\cdot\omega^{-2}\|_{L^{p,\infty}(\omega)}\leq C\|f_{1}\|_{L^{p_{1}}(\omega)}\|f_{2}\|_{L^{p_{2}}(\omega)}.$$
\end{enumerate}
\end{theorem}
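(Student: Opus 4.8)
The plan is to prove the five statements equivalent through the two cyclic chains
$$\mathrm{(a1)}\Rightarrow\mathrm{(a2)}\Rightarrow\mathrm{(a3)}\Rightarrow\mathrm{(a1)}\quad\text{and}\quad\mathrm{(a1)}\Rightarrow\mathrm{(a4)}\Rightarrow\mathrm{(a5)}\Rightarrow\mathrm{(a1)}.$$
Within each chain the passage from the strong estimate to the weak one, $\mathrm{(a2)}\Rightarrow\mathrm{(a3)}$ and $\mathrm{(a4)}\Rightarrow\mathrm{(a5)}$, is immediate, since $\|\cdot\|_{L^{p,\infty}(\omega)}\le\|\cdot\|_{L^{p}(\omega)}$. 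Thus only the two sufficiency implications out of $\mathrm{(a1)}$ and the two necessity implications into $\mathrm{(a1)}$ require work.

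For $\mathrm{(a1)}\Rightarrow\mathrm{(a2)}$ I would fix a cube $Q$ and use the identity
$$[b,T]_{1}(f_{1},f_{2})=(b-b_{Q})T(f_{1},f_{2})-T((b-b_{Q})f_{1},f_{2}),$$
and its analogue for $j=2$, to reduce matters to a pointwise control of the sharp maximal function $M^{\#}\big([b,T]_{j}(f_{1},f_{2})\big)$ by $\|b\|_{\mathrm{BMO}(\omega)}$ times products of Hardy--Littlewood maximal operators applied to $f_{1}$ and $f_{2}$, together with $M\big(T(f_{1},f_{2})\big)$. Combined with the weighted boundedness of the bilinear Calder\'{o}n--Zygmund operator $T$ itself and the Fefferman--Stein inequality, this yields the bound into $L^{p}(\omega^{1-p})$; the $A_{1}$ hypothesis on $\omega$ is what produces the shift from the natural weight of $T$ to $\omega^{1-p}$ on the commutator side (a homogeneity check in $\omega\mapsto c\omega$ confirms the exponents are consistent). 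The iterated case $\mathrm{(a1)}\Rightarrow\mathrm{(a4)}$ is handled by iterating this scheme once more, which produces the factor $\|b\|_{\mathrm{BMO}(\omega)}^{2}$ and the extra weight power $\omega^{-2}$.

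The core of the theorem is the necessity, and here the available hypothesis is only a weak-type bound, so the classical Coifman--Rochberg--Weiss duality argument \cite{CRW} (pairing against $\mathrm{sgn}(b-b_{Q})$ through H\"{o}lder's inequality with conjugate exponents) is not directly available. Instead I would fix $Q$, select a neighbouring cube $Q'$ with $\ell(Q')\sim\ell(Q)$ and $\mathrm{dist}(Q,Q')\sim\ell(Q)$ so that $K(x-y_{1},x-y_{2})$ stays off its singularity, and expand $1/K$ in its absolutely convergent Fourier series on the corresponding cube in $\mathbb{R}^{2n}$ (this is exactly the hypothesis imposed on $K$). Choosing modulated test functions $f_{1},f_{2}$ built from $Q'$ and these exponentials, one arranges on $Q$ that $(b-b_{Q})$ is recovered from $[b,T]_{j}(f_{1},f_{2})$, whence, using $\|\chi_{Q'}\|_{L^{p_{i}}(\omega)}=\omega(Q')^{1/p_{i}}\sim\omega(Q)^{1/p_{i}}$ (by $A_{1}$) and $\|\chi_{Q}\|_{L^{p,\infty}(\omega)}=\omega(Q)^{1/p}$, the quantity
$$\frac{1}{\|\chi_{Q}\|_{L^{p,\infty}(\omega)}}\big\|(b-b_{Q})\omega^{-1}\chi_{Q}\big\|_{L^{p,\infty}(\omega)}$$
is dominated, uniformly in $Q$, by the weak-type operator norm in $\mathrm{(a3)}$. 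When $1<p<\infty$, Theorem \ref{main1} then gives $b\in\mathrm{BMO}(\omega)$ at once. When $\tfrac12<p\le 1$ (the range answering Problem 1), I would first downgrade the weak-type oscillation bound to a strong $L^{r}(\omega)$ bound on $Q$ for some fixed $0<r<p$, using $\|g\chi_{Q}\|_{L^{r}(\omega)}\lesssim\omega(Q)^{1/r-1/p}\|g\chi_{Q}\|_{L^{p,\infty}(\omega)}$, and then invoke Theorem \ref{main2} to conclude $b\in\mathrm{BMO}(\omega)$. The implication $\mathrm{(a5)}\Rightarrow\mathrm{(a1)}$ follows the same route, the only change being that the iterated commutator returns the quadratic oscillation $(b-b_{Q})^{2}$, from which a single power of $|b-b_{Q}|$ is extracted by again appealing to the self-improving $L^{r}$ characterization of Theorem \ref{main2}.

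The step I expect to be the main obstacle is this necessity analysis under a merely weak-type hypothesis: arranging the Fourier-series test-function construction so that the oscillation of $b$ is captured in the $L^{p,\infty}(\omega)$ quasi-norm rather than in an $L^{1}$ average, and controlling the error terms arising from the tails of the expansion and from the non-local part of $T$ (the interaction of $Q$ with the complement of $Q'$). The role of Theorems \ref{main1} and \ref{main2} is precisely to absorb the failure of duality in these regimes, and for the iterated commutator the further difficulty of passing from $(b-b_{Q})^{2}$ back to $|b-b_{Q}|$ is exactly what the equivalence of the $\mathrm{BMO}^{r}(\omega)$ norms is designed to resolve.
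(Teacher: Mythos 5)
Your proposal follows essentially the same route as the paper: sharp maximal function estimates plus Fefferman--Stein for the sufficiency directions, and for necessity the Fourier-series expansion of $1/K$ on a cube away from the origin with modulated test functions supported on a separated cube $Q'$, concluding via Theorem \ref{main1} when $p>1$ and via a Kolmogorov-type reduction to an $L^{r}(\omega)$ oscillation bound plus Theorem \ref{main2} when $p\le 1$ (the paper phrases this as the embedding $L^{p,\infty}(\omega)\subset M^{p}_{q}(\omega)$), with the quadratic identity for $(b-b_{Q'})^{2}$ handling the iterated commutator exactly as you describe. The only cosmetic difference is that the paper places $Q'$ at distance $\sim\delta^{-1}\ell(Q)$ rather than $\sim\ell(Q)$, which is just the scaling needed to land inside the ball where the Fourier expansion of $1/K$ is valid.
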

\vspace{0.3cm}

Specially, if $\omega(x)\equiv 1$, we have
\begin{corollary}
Let $\vec{b}=(b,b)$ and $T$ be a bilinear convolution type operator defined by
$$T(f_{1},f_{2})(x)=\int_{\mathbb{R}^n}\int_{\mathbb{R}^n}K(x-y_{1},x-y_{2})f_{1}(y_{1})f_{2}(y_{2})dy_{1}dy_{2}$$
for all $x\notin {\rm supp} f_{1}\bigcap {\rm supp} f_{2}$, where $K$ is a bilinear Calder\'{o}n-Zygmund kernel and such that for any cube $Q\subset \mathbb{R}^{2n}$ with $0\notin Q$, the Fourier series of $\frac{1}{K}$ is absolutely convergent. For $1<p_{1},p_{2}<\infty$ with $1/p=1/p_{1}+1/p_{2}$, the following statements are equivalent:
\begin{enumerate}
\item [\rm(b1)] $b\in \mathrm{BMO}$;
\item [\rm(b2)] $[b,T]_{j}$ is a bounded operator from $L^{p_{1}}\times L^{p_{2}}$ to $L^{p}$ for $j=1,2$;
\item [\rm(b3)] $[b,T]_{j}$ is a bounded operator from $L^{p_{1}}\times L^{p_{2}}$ to $L^{p,\infty}$ for $j=1,2$;
\item [\rm(b4)] $[\Pi\vec{b},T]$ is a bounded operator from $L^{p_{1}}\times L^{p_{2}}$ to $L^{p}$;
\item [\rm(b5)] $[\Pi\vec{b},T]$ is a bounded operator from $L^{p_{1}}\times L^{p_{2}}$ to $L^{p,\infty}$.
\end{enumerate}
\end{corollary}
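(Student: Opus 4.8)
The displayed corollary is the case $\omega\equiv1$ of Theorem \ref{t1} (note $1\in A_{1}$, $\omega^{1-p}\equiv1$, $L^{p}(\omega^{1-p})=L^{p}$ and $\mathrm{BMO}(\omega)=\mathrm{BMO}$), so it suffices to prove the five equivalences of Theorem \ref{t1}; I describe a plan for that. The plan is to establish the two cycles (a1) $\Rightarrow$ (a2) $\Rightarrow$ (a3) $\Rightarrow$ (a1) and (a1) $\Rightarrow$ (a4) $\Rightarrow$ (a5) $\Rightarrow$ (a1). The inclusions $L^{p}(\omega)\subset L^{p,\infty}(\omega)$ make (a2) $\Rightarrow$ (a3) and (a4) $\Rightarrow$ (a5) immediate, so the genuine content splits into the sufficiency implications (a1) $\Rightarrow$ (a2), (a1) $\Rightarrow$ (a4) and the necessity implications (a3) $\Rightarrow$ (a1), (a5) $\Rightarrow$ (a1). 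Theorems \ref{main1} and \ref{main2} enter exactly at the necessity step, where they let one read off membership in $\mathrm{BMO}(\omega)$ from a weak-type, respectively small-exponent, oscillation estimate.

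\emph{Sufficiency.} For (a1) $\Rightarrow$ (a2) I would run the sharp maximal function scheme: establish a pointwise bound $M^{\#}\big([b,T]_{j}(f_{1},f_{2})\big)\lesssim \|b\|_{\mathrm{BMO}(\omega)}\,\big(\text{products of maximal operators of }f_{1},f_{2}\big)$ from the size/regularity of $K$ together with a John--Nirenberg inequality adapted to $\mathrm{BMO}(\omega)$ (here $\omega\in A_{1}$ supplies the reverse-H\"older and self-improvement needed). Feeding this into the weighted $L^{p}$ theory of the maximal function and tracking the weight shift from $\omega$ on the input side to $\omega^{1-p}$ on the output side --- the bilinear Bloom phenomenon --- yields the bound $L^{p_{1}}(\omega)\times L^{p_{2}}(\omega)\to L^{p}(\omega^{1-p})$. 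Implication (a1) $\Rightarrow$ (a4) is identical in spirit, the iterated kernel $(b(x)-b(y_{1}))(b(x)-b(y_{2}))K$ producing a quadratic factor $\|b\|_{\mathrm{BMO}(\omega)}^{2}$.

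\emph{Necessity.} This is the heart. Fix a cube $Q$ of side $\ell$ and a companion cube $\tilde{Q}$, a fixed translate of $Q$ at distance $\approx\ell$, so that for $x\in Q$ and $y_{1},y_{2}\in\tilde{Q}$ the point $(x-y_{1},x-y_{2})$ ranges over a cube in $\mathbb{R}^{2n}$ bounded away from the origin; there $K\neq0$ and, by hypothesis, $1/K$ has an absolutely convergent Fourier series $\sum_{m}a_{m}e^{\,i v_{m}\cdot(\,\cdot\,)}$. Writing $1=K\cdot\tfrac1K$, inserting this expansion into $\tfrac{1}{|\tilde{Q}|^{2}}\int_{\tilde{Q}}\int_{\tilde{Q}}(b(x)-b(y_{1}))\,dy_{1}dy_{2}=b(x)-b_{\tilde{Q}}$, and using the homogeneity of $K$ (degree $-2n$) to cancel $|\tilde{Q}|^{-2}$ against the rescaling $1/K(\ell\,\cdot)=\ell^{2n}/K(\cdot)$, one gets for $x\in Q$ the identity $b(x)-b_{\tilde{Q}}=\sum_{m}a_{m}e^{\,i v_{m}\cdot(x,x)/\ell}[b,T]_{1}(g^{m}_{1},g^{m}_{2})(x)$, where $g^{m}_{i}(y)=e^{-i v^{i}_{m}\cdot y/\ell}\chi_{\tilde{Q}}(y)$ satisfies $\|g^{m}_{i}\|_{L^{p_{i}}(\omega)}=\omega(\tilde{Q})^{1/p_{i}}$. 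Taking absolute values, applying (a3) termwise, and summing the resulting series gives $\big\|(b-b_{\tilde{Q}})\omega^{-1}\chi_{Q}\big\|_{L^{p,\infty}(\omega)}\lesssim \omega(\tilde{Q})^{1/p}\approx\omega(Q)^{1/p}=\|\chi_{Q}\|_{L^{p,\infty}(\omega)}$, the comparison $\omega(\tilde{Q})\approx\omega(Q)$ coming from the $A_{1}$ doubling property. The same computation with the iterated kernel gives $\big\|(b-b_{\tilde{Q}})^{2}\omega^{-2}\chi_{Q}\big\|_{L^{p,\infty}(\omega)}\lesssim\omega(Q)^{1/p}$, i.e. $\big\|(b-b_{\tilde{Q}})\omega^{-1}\chi_{Q}\big\|_{L^{2p,\infty}(\omega)}\lesssim\omega(Q)^{1/(2p)}=\|\chi_{Q}\|_{L^{2p,\infty}(\omega)}$. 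After replacing $b_{\tilde{Q}}$ by $b_{Q}$ (which costs a constant, via the equivalence of average-centered and infimum-centered oscillations), these are precisely the statements $b\in\mathrm{BMO}_{X}(\omega)$ with $X=L^{p,\infty}(\omega)$, respectively $X=L^{2p,\infty}(\omega)$.

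\emph{Conclusion and the main obstacle.} When $p>1$ in (a3), or always in the iterated case (since $1/p=1/p_{1}+1/p_{2}<2$ forces $2p>1$), the exponent exceeds $1$, the relevant weak space is normable, and Theorem \ref{main1} converts the oscillation bound directly into $b\in\mathrm{BMO}(\omega)$, closing the cycles. The genuine difficulty --- and the point of Problem 1 --- is the range $\tfrac12<p\le1$ in (a3), where $L^{p,\infty}$ is only quasi-Banach and H\"older duality at exponent $p$ is unavailable. Here I would use the embedding $L^{p,\infty}(\omega)\hookrightarrow L^{r}(\omega)$ on the finite-measure set $Q$, with $\|h\|_{L^{r}(\omega,Q)}\lesssim\omega(Q)^{1/r-1/p}\|h\|_{L^{p,\infty}(\omega)}$ for $0<r<p<1$, turning the estimate above into $\|b\|_{\mathrm{BMO}^{r}(\omega)}\lesssim1$; Theorem \ref{main2} then identifies $\mathrm{BMO}^{r}(\omega)$ with $\mathrm{BMO}(\omega)$ and finishes the proof. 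The delicate point in this range is the summation over $m$ in the quasi-Banach regime, since absolute convergence gives only $\sum_{m}|a_{m}|<\infty$ and not $\sum_{m}|a_{m}|^{r}<\infty$. I expect to tame it by truncation: split $1/K=P_{N}+R_{N}$ into a trigonometric polynomial $P_{N}$ and a tail with $\|R_{N}\|_{\infty}\le\sum_{|m|>N}|a_{m}|$ as small as desired; the finite part $P_{N}$ gives a fixed finite sum of commutators with no convergence issue, while the remainder is an integral operator with small kernel whose contribution is absorbed into the left-hand side after invoking the $\mathrm{BMO}^{r}(\omega)\simeq\mathrm{BMO}(\omega)$ equivalence. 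This truncation/absorption step is where the quasi-Banach nature of the target is controlled, and it is the step I expect to require the most care.
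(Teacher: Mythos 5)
Your reduction is exactly what the paper does: the corollary is stated as the specialization $\omega\equiv 1$ of Theorem \ref{t1}, with no separate argument, so all the content lies in Theorem \ref{t1}, and your outline of that theorem follows the paper's route almost step for step --- the same two cycles, sufficiency via the pointwise sharp-maximal estimates of Lemma \ref{lem4} fed into the Fefferman--Stein inequality of Lemma \ref{lem3}, and necessity via the Fourier expansion of $1/K$ on a cube away from the origin, the companion-cube identity $b-b_{Q'}=\sum_j a_j\,[b,T]_1(g_j,h_j)\,m_j$ with $\|g_j\|_{L^{p_1}(\omega)}\|h_j\|_{L^{p_2}(\omega)}=\omega(Q')^{1/p}\lesssim\omega(Q)^{1/p}$, and Theorems \ref{main1} and \ref{main2} to pass from the resulting weak-type (resp.\ small-exponent) oscillation bound back to ${\rm BMO}(\omega)$. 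The one genuine divergence is the range $\tfrac12<p\le 1$ in (a3)$\Rightarrow$(a1): the paper embeds $L^{p,\infty}(\omega)$ into the weighted Morrey space $M^p_q(\omega)$ with $0<q<p$ and sums the series termwise in that quasi-norm, landing in ${\rm BMO}_q(\omega)$ and invoking Theorem \ref{main2}, whereas you truncate the Fourier series into a trigonometric polynomial plus a small tail and absorb the tail. Your concern about $\sum_j|a_j|^q$ versus $\sum_j|a_j|$ is well taken: the paper's termwise summation treats $M^p_q(\omega)$ with $q<1$ as if it were normable, so it faces exactly the subadditivity issue you flag, and your truncation/absorption device is the more careful alternative; its price is that the absorption step requires the per-cube oscillation to be finite a priori (which it is, by H\"older, since $b\in L^1_{\rm loc}$) and a bound of the tail operator by $\varepsilon_N$ times that same oscillation, which is the delicate point you correctly single out.
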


A same argument we also have the following result.
\begin{corollary}
Let $\vec{b}=(b,b)$ and $I_{\alpha}$ be a bilinear fractional integral operator defined by
$$I_{\alpha}(f_{1},f_{2})(x)=\int_{\mathbb{R}^n}\int_{\mathbb{R}^n}\frac{f_{1}(y_{1})f_{2}(y_{2})}{\big(|x-y_{1}|+|x-y_{2}|\big)^{2n-\alpha}}dy_{1}dy_{2}.$$
For $0<\alpha<2n$, $1<p_{1},p_{2}<\infty$ with $1/q=1/p_{1}+1/p_{2}-\alpha/n$, the following statements are equivalent:
\begin{enumerate}
\item [\rm(c1)] $b\in \mathrm{BMO}$;
\item [\rm(c2)] $[b,I_{\alpha}]_{j}$ is a bounded operator from $L^{p_{1}}\times L^{p_{2}}$ to $L^{q}$ for $j=1,2$;
\item [\rm(c3)] $[b,I_{\alpha}]_{j}$ is a bounded operator from $L^{p_{1}}\times L^{p_{2}}$ to $L^{q,\infty}$ for $j=1,2$;
\item [\rm(c4)] $[\Pi\vec{b},I_{\alpha}]$ is a bounded operator from $L^{p_{1}}\times L^{p_{2}}$ to $L^{q}$;
\item [\rm(c5)] $[\Pi\vec{b},I_{\alpha}]$ is a bounded operator from $L^{p_{1}}\times L^{p_{2}}$ to $L^{q,\infty}$.
\end{enumerate}
\end{corollary}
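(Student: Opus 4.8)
The plan is to prove all five statements equivalent by closing the two cycles (c1)$\Rightarrow$(c2)$\Rightarrow$(c3)$\Rightarrow$(c1) and (c1)$\Rightarrow$(c4)$\Rightarrow$(c5)$\Rightarrow$(c1), following the proof of Theorem \ref{t1} with $\omega\equiv 1$, the only genuine change being the shape of the kernel. The implications (c1)$\Rightarrow$(c2) and (c1)$\Rightarrow$(c4) are the sufficiency direction: for $b\in\mathrm{BMO}$ the commutators $[b,I_\alpha]_j$ and the iterated commutator $[\Pi\vec b,I_\alpha]$ map $L^{p_1}\times L^{p_2}$ into $L^q$, which is the known boundedness of bilinear fractional-integral commutators with $\mathrm{BMO}$ symbols over the full range $1/q=1/p_1+1/p_2-\alpha/n$, obtained from a sharp maximal function estimate together with the Fefferman--Stein inequality (valid for all $0<q<\infty$). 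The inclusions (c2)$\Rightarrow$(c3) and (c4)$\Rightarrow$(c5) are immediate from $\|\cdot\|_{L^{q,\infty}}\le\|\cdot\|_{L^{q}}$. Hence the heart of the matter is the two necessity implications (c3)$\Rightarrow$(c1) and (c5)$\Rightarrow$(c1), which is precisely where the range $q<1$ from Problems 1 and 2 is handled.

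To prove (c3)$\Rightarrow$(c1), I would fix a cube $Q=Q(x_0,\ell)$ and choose two translated cubes $Q_1,Q_2$ of side length $\ell$, positioned so that for $x\in Q$ and $y_i\in Q_i$ the point $\big(\tfrac{x-y_1}{\ell},\tfrac{x-y_2}{\ell}\big)$ stays inside a fixed cube $R\subset\mathbb{R}^{2n}$ whose closure avoids both the diagonal and the origin. On $R$ the reciprocal of the kernel, $K_0(u,v)=(|u|+|v|)^{2n-\alpha}$, is smooth and strictly positive, so --- unlike in Theorem \ref{t1}, where the possible vanishing of $K$ forced the hypothesis that $1/K$ have absolutely convergent Fourier series --- here $K_0$ \emph{automatically} admits an absolutely convergent Fourier expansion $K_0(u,v)=\sum_m a_m e^{i v_m\cdot(u,v)}$ with rapidly decaying coefficients. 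Writing $v_m=(v_m',v_m'')$ and $g_i^m(y)=e^{-iv_m'\cdot y/\ell}\chi_{Q_i}(y)$ (with $v_m''$ when $i=2$), inserting this expansion into $b(x)-b_{Q_1}=|Q_1|^{-1}\int_{Q_1}(b(x)-b(y_1))\,dy_1$ so as to reconstruct the commutator kernel yields, for $x\in Q$,
$$b(x)-b_{Q_1}=\frac{\ell^{2n-\alpha}}{|Q_1||Q_2|}\sum_m a_m\,e^{i(v_m'+v_m'')\cdot x/\ell}\,[b,I_\alpha]_1(g_1^m,g_2^m)(x).$$

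The rest is quantitative. Fixing $r$ with $0<r<\min(q,1)$ and using the elementary bound $\big(\int_Q|g|^r\big)^{1/r}\le C\,|Q|^{1/r-1/q}\,\|g\|_{L^{q,\infty}}$, valid for $r<q$, together with $|e^{i\theta}|=1$ and $\|g_i^m\|_{L^{p_i}}=\ell^{n/p_i}$ uniformly in $m$, I would apply (c3) term by term and sum the (rapidly decaying) coefficients to obtain
$$\Big(\int_Q|b-b_{Q_1}|^{r}\Big)^{1/r}\le C\,\frac{\ell^{2n-\alpha}}{\ell^{2n}}\,\ell^{n(1/r-1/q)}\,\ell^{n/p_1}\ell^{n/p_2}=C\,\ell^{n/r},$$
where the exponents collapse precisely because $n/p_1+n/p_2-n/q=\alpha$. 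Thus $|Q|^{-1}\int_Q|b-b_{Q_1}|^{r}\le C$ uniformly in $Q$, i.e. $b\in\mathrm{BMO}^{r}$, and Theorem \ref{main2} (with $\omega\equiv1$) upgrades this to $b\in\mathrm{BMO}$. The implication (c5)$\Rightarrow$(c1) runs identically except that one takes $Q_1=Q_2$, so that $b_{Q_1}=b_{Q_2}=:b_\ast$ and the same expansion produces $(b(x)-b_\ast)^2$ on the left; choosing $r<\min(q,\tfrac12)$ gives $|Q|^{-1}\int_Q|b-b_\ast|^{2r}\le C$, i.e. $b\in\mathrm{BMO}^{2r}=\mathrm{BMO}$.

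The main obstacle --- and the only point at which $q<1$ really bites --- is the passage from a weak-type estimate to a genuine $\mathrm{BMO}$ bound: controlling an $L^{q}$ average directly fails when $q<1$, and the device that rescues the argument is the freedom to integrate with a smaller exponent $r<q$ and then identify $\mathrm{BMO}^{r}$ with $\mathrm{BMO}$ through Theorem \ref{main2}. The supporting technical checks are the uniform-in-$m$ control of $\|g_i^m\|_{L^{p_i}}$ and the absolute convergence of the Fourier series of $K_0$, the latter coming for free here because the fractional kernel never vanishes off the diagonal.
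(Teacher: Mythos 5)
Your proposal is correct and takes essentially the same route as the paper, which proves this corollary by invoking the argument of Theorem \ref{t1} with $\omega\equiv 1$: the Fourier expansion of $1/K$ (which, as you rightly observe, converges absolutely for free for the nonvanishing fractional kernel, so no extra hypothesis is needed), the small-exponent estimate with $r<\min(q,1)$ (resp.\ $r<\min(q,\tfrac12)$ for the iterated commutator), and Theorem \ref{main2} to upgrade the resulting ${\rm BMO}^{r}$-type bound to ${\rm BMO}$. Your Kolmogorov inequality is exactly the paper's embedding $L^{q,\infty}\subset M^{q}_{r}$ in different notation, so the two arguments coincide in substance.
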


Finally, two open problems will be given.
\vspace{0.3cm}

\textbf{Problem A.} Let $\vec{b}=(b_{1},b_{2})$ with $b_{1}\neq b_{2}$ and $[\Sigma\vec{b},T]:=[b_{1},T]_{1}+[b_{2},T]_{2}$. If $[\Sigma\vec{b},T]$ is a bounded operator from $L^{p_{1}}\times L^{p_{2}}$ to $L^{q}$, is $\vec{b}$ in ${\rm BMO}\times {\rm BMO}$?
\vspace{0.3cm}

\textbf{Problem B.} Let $\vec{b}=(b_{1},b_{2})$ with $b_{1}\neq b_{2}$. If $[\Pi\vec{b},T]$ is a bounded operator from $L^{p_{1}}\times L^{p_{2}}$ to $L^{q}$, is $\vec{b}$ in ${\rm BMO}\times {\rm BMO}$?
\vspace{0.3cm}

\section{Main Lemmas}

Throughout this paper, the letter $C$ denotes constants which are independent of main variables and may change from one occurrence to another. $Q(x,r)$ denotes a cube centered at $x$, with side length $r$, sides parallel to the axes.

For $X=L^{q_{2},\infty}(\omega)$, it is clear that ${\rm BMO}^{q_{1}}(\omega)$ is contained in ${\rm BMO}_{X}(\omega)$ and $\|\cdot\|_{{\rm BMO}_{X}(\omega)}\leq \|\cdot\|_{{\rm BMO}^{q_{2}}(\omega)} \leq \|\cdot\|_{{\rm BMO}^{q_{1}}(\omega)}$ if $1<q_{2}\leq q_{1}<\infty$. However, for $1<q_{1}< q_{2}<\infty$, one has the reverse inequality as follows.
\begin{lemma}\label{lem1}
Let $1< q_{1}< q_{2}<\infty$, $\omega\in A_{\infty}$ and $X=L^{q_{2},\infty}(\omega)$. Then ${\rm BMO}_{X}(\omega)$ is contained in $ {\rm BMO}^{q_{1}}(\omega)$ and $\|\cdot\|_{{\rm BMO}^{q_{1}}(\omega)}\leq C\|\cdot\|_{{\rm BMO}_{X}(\omega)}$.
\end{lemma}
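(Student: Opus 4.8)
The plan is to run a weak-to-strong argument through the distribution function, exploiting that the target exponent $q_1$ is strictly smaller than the weak exponent $q_2$. Fix a cube $Q$, set $g=(f-f_Q)\omega^{-1}$ and $d\mu=\omega\,dx$, and write $A=\|f\|_{{\rm BMO}_X(\omega)}$. First I would record the two elementary identities $\|\chi_Q\|_{L^{q_2,\infty}(\omega)}=\omega(Q)^{1/q_2}$ (since $\{\chi_Q>t\}$ equals $Q$ for $0<t<1$ and is empty for $t\ge 1$) and $\big\|(f-f_Q)\chi_Q\omega^{-1}\big\|_{L^{q_1}(\omega)}^{q_1}=\int_Q|g|^{q_1}\,d\mu$. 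Thus the quantity to be estimated is $\omega(Q)^{-1}\int_Q|g|^{q_1}\,d\mu$, while the hypothesis reads $\sup_{t>0}t\,\mu(E_t)^{1/q_2}\le A\,\omega(Q)^{1/q_2}$, where $E_t=\{x\in Q:|g(x)|>t\}$.

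From the hypothesis I would extract the pointwise-in-$t$ bound $\mu(E_t)\le A^{q_2}\omega(Q)\,t^{-q_2}$, valid for all $t>0$, together with the trivial bound $\mu(E_t)\le\omega(Q)$. Then I would use the layer-cake formula $\int_Q|g|^{q_1}\,d\mu=q_1\int_0^\infty t^{q_1-1}\mu(E_t)\,dt$ and split the $t$-integral at a cutoff level $t_0$, applying $\mu(E_t)\le\omega(Q)$ on $(0,t_0)$ and $\mu(E_t)\le A^{q_2}\omega(Q)t^{-q_2}$ on $(t_0,\infty)$. The tail integral converges precisely because $q_1-q_2-1<-1$, which is the one place the strict inequality $q_1<q_2$ is used.

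Including the factor $q_1$ from the layer-cake formula, the two pieces evaluate to $\omega(Q)\,t_0^{q_1}$ and $\tfrac{q_1}{q_2-q_1}A^{q_2}\omega(Q)\,t_0^{q_1-q_2}$; choosing $t_0=A$ balances them and yields $\int_Q|g|^{q_1}\,d\mu\le \tfrac{q_2}{q_2-q_1}A^{q_1}\omega(Q)$. Dividing by $\omega(Q)$, taking the supremum over all cubes $Q$, and then the $q_1$-th root gives $\|f\|_{{\rm BMO}^{q_1}(\omega)}\le\big(q_2/(q_2-q_1)\big)^{1/q_1}\|f\|_{{\rm BMO}_X(\omega)}$, the asserted inequality with $C=\big(q_2/(q_2-q_1)\big)^{1/q_1}$.

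I do not expect the $A_\infty$ hypothesis to enter the estimate itself; it serves only to guarantee that $\omega$ is positive almost everywhere and locally integrable, so that $g$, $\omega(Q)$, and all displayed quantities are well defined and finite. The only real care needed is the bookkeeping between the two exponents and the verification that the choice $t_0=A$ balances the two contributions up to the explicit constant; there is no serious analytic obstacle beyond this.
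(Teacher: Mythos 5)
Your proof is correct and follows essentially the same route as the paper's: both establish $\|\chi_Q\|_{L^{q_2,\infty}(\omega)}=\omega(Q)^{1/q_2}$, extract the distributional bound $\mu(E_t)\le A^{q_2}\omega(Q)t^{-q_2}$, and apply the layer-cake formula with a split of the $t$-integral, using the trivial bound below the cutoff and the weak-type bound above it. The only difference is the choice of cutoff ($t_0=A$ versus the paper's $N=A(q_1/(q_2-q_1))^{1/q_2}$), which merely changes the explicit constant.
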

\begin{proof}
Let $f\in {\rm BMO}_{X}(\omega)$. Given a fixed cube $Q \subset \mathbb{R}^n$, it is easy to see that $\|\chi_{Q}\|_{L^{q,\infty}(\omega)}=\omega(Q)^{1/q}$, then for any $\lambda>0$,
$$\frac{1}{\omega(Q)^{1/q_{2}}}\Big(\lambda^{q_{2}}\omega\big\{x\in Q:|f(x)-f_{Q}|>\lambda\omega(x)\big\}\Big)^{1/q_{2}}\leq \|f\|_{{\rm BMO}_{X}(\omega)};$$
that is,
$$\omega\big\{x\in Q:|f(x)-f_{Q}|>\lambda\omega(x)\big\}\leq \|f\|^{q_{2}}_{{\rm BMO}_{X}(\omega)}\omega(Q)\lambda^{-q_{2}}.$$
Choose
$$N=\|f\|_{{\rm BMO}_{X}(\omega)}\Big(\frac{q_{1}}{q_{2}-q_{1}}\Big)^{1/q_{2}}.$$
Thus,
\begin{eqnarray*}
\int_{Q}|f(x)-f_{Q}|^{q_{1}}\omega(x)^{1-q_{1}}dx&=&
\int_{Q}\Big(\frac{|f(x)-f_{Q}|}{\omega(x)}\Big)^{q_{1}}\omega(x)dx\\
&=&q_{1}\int_{0}^{\infty}\lambda^{q_{1}-1}\omega\big\{x\in Q:|f(x)-f_{Q}|>\lambda\omega(x)\big\}d\lambda\\
&\leq&q_{1}\int_{0}^{N}\lambda^{q_{1}-1}\omega(Q)d\lambda+q_{1}\int_{N}^{\infty}\lambda^{q_{1}-1}\|f\|^{q_{2}}_{{\rm BMO}^{q_{2}}_{X}(\omega)}|Q|\lambda^{-q_{2}}d\lambda\\
&=&\omega(Q)N^{q_{1}}+\frac{q_{1}}{q_{2}-q_{1}}\|f\|^{q_{2}}_{{\rm BMO}_{X}(\omega)}\omega(Q)N^{q_{1}-q_{2}},
\end{eqnarray*}
which gives
$$\bigg(\frac{1}{\omega(Q)}\int_{Q}|f(y)-f_{Q}|^{q_{1}}\omega(x)^{1-q_{1}}dy\bigg)^{1/q_{1}}\leq 2\Big(\frac{q_{1}}{q_{2}-q_{1}}\Big)^{1/q_{2}}\|f\|_{{\rm BMO}_{X}(\omega)}.$$
Then
$$\|f\|_{{\rm BMO}^{q_{1}}(\omega)}\leq 2\Big(\frac{q_{1}}{q_{2}-q_{1}}\Big)^{1/q_{2}}\|f\|_{{\rm BMO}_{X}(\omega)}$$
and the lemma follows.
\end{proof}

Let $\omega\in A_{1}$ and $d\mu(x)=\omega(x)dx$. For $0<r<\infty$, we set
$$\|f\|_{{\rm BMO}_{r}(\omega)}=\sup_{Q}\inf_{c}\Big\{\frac{1}{\mu(Q)}\int_{Q}\Big(\frac{|f(x)-c|}{\omega(x)}\Big)^{r}d\mu(x)\Big\}^{1/r},$$
and ${\rm BMO}_{r}(\omega)=\{f\in L_{loc}:\|f\|_{{\rm BMO}_{r}(\omega)}<\infty\}$.
\begin{lemma}\label{lem2}
Let $0<r<1$, $\omega\in A_{1}$ and $d\mu(x)=\omega(x)dx$. Suppose $\|f\|_{{\rm BMO}_{r}(\omega)}=1$ and for each cube $Q$ let $c_{Q}$ be the value which minimizes $\displaystyle{\int_{Q}\Big(\frac{|f(x)-c|}{\omega(x)}\Big)^{r}d\mu(x)}$. Then
$$\mu\Big(\big\{x\in Q: \frac{|f(x)-c_{Q}|}{\omega(x)}>t\big\}\Big)\leq c_{1}e^{-c_{2}t}\mu(Q),$$
where $c_{1}$ and $c_{2}$ are positive constants.
\end{lemma}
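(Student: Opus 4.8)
The plan is to prove a John--Nirenberg type exponential decay by a Calder\'on--Zygmund decomposition of the doubling measure $\mu$, organized as a self-improving recursion on the distribution function. Write $g_Q=|f-c_Q|/\omega$ and, since $\|f\|_{{\rm BMO}_r(\omega)}=1$, record the two structural facts I will use repeatedly: $\mu=\omega\,dx$ is doubling and enjoys a reverse H\"older inequality (because $\omega\in A_1\subset A_\infty$), and the $A_1$ condition gives the pointwise lower bound $\omega(x)\ge[\omega]_{A_1}^{-1}\,\omega(Q)/|Q|$ for a.e.\ $x\in Q$. As a starting estimate, Chebyshev's inequality applied to the minimizing relation $\frac1{\mu(Q)}\int_Q g_Q^{\,r}\,d\mu\le 1$ yields the weak bound $\mu\{x\in Q:g_Q(x)>\lambda\}\le\lambda^{-r}\mu(Q)$ for every cube $Q$ and every $\lambda>0$. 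I then introduce the normalized distribution function $\Phi(t)=\sup_Q\mu\{x\in Q:g_Q(x)>t\}/\mu(Q)$; thus $\Phi(t)\le t^{-r}$, and the desired conclusion $\mu\{x\in Q:g_Q>t\}\le c_1e^{-c_2t}\mu(Q)$ follows once I show $\Phi$ decays exponentially.

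The heart of the argument is a recursive inequality: I claim there exist constants $L>0$, $\theta\in(0,1)$, depending only on $n$, $r$ and $[\omega]_{A_1}$, and a level $\lambda_0$, such that $\Phi(t)\le\theta\,\Phi(t-L)$ for all $t>\lambda_0$. To prove it I fix $Q$, set $E=\{x\in Q:g_Q(x)>\lambda_0\}$ (so $\mu(E)\le\lambda_0^{-r}\mu(Q)$ is as small as desired once $\lambda_0$ is large), and perform a Calder\'on--Zygmund stopping-time decomposition of $E$ with respect to $\mu$ at a small density level $\eta$: let $\{Q_j\}$ be the maximal dyadic subcubes of $Q$ with $\mu(E\cap Q_j)/\mu(Q_j)>\eta$. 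By the Lebesgue differentiation theorem for the doubling measure $\mu$ one has $E\subseteq\bigcup_jQ_j$ up to a $\mu$-null set; disjointness and maximality give $\sum_j\mu(Q_j)\le\eta^{-1}\mu(E)\le\eta^{-1}\lambda_0^{-r}\mu(Q)=:\theta\mu(Q)$ with $\theta<1$; and doubling yields the reverse density bound $\mu(E\cap Q_j)/\mu(Q_j)\le C_\mu\eta$. Since for $t>\lambda_0$ the set $\{g_Q>t\}$ lies in $E\subseteq\bigcup_jQ_j$, it then suffices to replace $c_Q$ by the local minimizer $c_{Q_j}$ on each $Q_j$ at the cost of a bounded drift, so that
$$\mu\{x\in Q:g_Q>t\}=\sum_j\mu\{x\in Q_j:g_Q>t\}\le\sum_j\mu\{x\in Q_j:g_{Q_j}>t-L\}\le\theta\,\Phi(t-L)\,\mu(Q).$$
Taking the supremum over $Q$ proves the claim, and iterating $\Phi(t)\le\theta\,\Phi(t-L)$ together with $\Phi\le1$ produces $\Phi(t)\le c_1e^{-c_2t}$.

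The main obstacle, and the only place where $r<1$ genuinely bites, is the drift estimate $g_Q\le g_{Q_j}+L$ a.e.\ on $Q_j$, i.e.\ $|c_Q-c_{Q_j}|\le L\,\operatorname{ess\,inf}_{Q_j}\omega$. Because averages of $g$ are \emph{not} controlled by its weak-$L^r$ bound (Jensen fails for $r<1$), I cannot imitate the classical proof via $L^1$ averages of $|f-f_Q|$; instead I locate a single point at which all relevant quantities are simultaneously small. Concretely, inside $Q_j$ I consider the three sets $A=\{g_Q\le\lambda_0\}=E^c\cap Q_j$, $B=\{g_{Q_j}\le\lambda_1\}$, and $D=\{\omega\le K\,\omega(Q_j)/|Q_j|\}$. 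The reverse density bound gives $\mu(A)\ge(1-C_\mu\eta)\mu(Q_j)$; the weak bound for the local minimizer gives $\mu(B)\ge(1-\lambda_1^{-r})\mu(Q_j)$; and the reverse H\"older inequality for $\omega$ gives $\mu(D)\ge(1-CK^{-\epsilon})\mu(Q_j)$ for the reverse H\"older exponent $\epsilon$. Choosing $\eta$ small and $\lambda_1,K$ large makes each fraction exceed $2/3$, so $A\cap B\cap D$ has positive $\mu$-measure; any point $x_0$ in it satisfies $|f(x_0)-c_Q|\le\lambda_0\omega(x_0)$, $|f(x_0)-c_{Q_j}|\le\lambda_1\omega(x_0)$ and $\omega(x_0)\le K[\omega]_{A_1}\operatorname{ess\,inf}_{Q_j}\omega$, whence $|c_Q-c_{Q_j}|\le(\lambda_0+\lambda_1)\omega(x_0)\le L\operatorname{ess\,inf}_{Q_j}\omega$ with $L=(\lambda_0+\lambda_1)K[\omega]_{A_1}$. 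Dividing by $\omega(x')\ge\operatorname{ess\,inf}_{Q_j}\omega$ for a.e.\ $x'\in Q_j$ returns the drift bound. The delicate bookkeeping that remains is to keep the density level $\eta$, the truncation heights $\lambda_0,\lambda_1,K$, and the resulting $L,\theta$ uniform in $Q$ and independent of the scale, so that the final $c_1,c_2$ depend only on $n$, $r$ and $[\omega]_{A_1}$; this uniformity, rather than any single inequality, is where the technical effort concentrates.
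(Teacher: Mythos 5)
Your argument is correct in outline, but it is not the route the paper takes, and the difference is concentrated exactly where you predicted the difficulty would be: the drift estimate. The paper also runs a Calder\'on--Zygmund iteration with respect to $d\mu=\omega\,dx$ starting from the Chebyshev bound $F_1(t)=t^{-r}$, but it stops on \emph{averages} rather than on densities: it selects maximal dyadic cubes $Q_j\subset Q_0$ with $s^r<\frac{1}{\mu(Q_j)}\int_{Q_j}\bigl(\frac{|f-c_{Q_0}|}{\omega}\bigr)^r d\mu\le 2^ns^r$, so that $\sum_j\mu(Q_j)\le s^{-r}\mu(Q_0)$ falls out of the lower stopping bound, and it controls the drift by averaging the $r$-subadditive inequality $|c_{Q_j}-c_{Q_0}|^r\le|f(y)-c_{Q_j}|^r+|f(y)-c_{Q_0}|^r$ over $Q_j$, using the upper stopping bound, the minimality of $c_{Q_j}$, and the pointwise $A_1$ estimate $\omega(y)^{-1}\le[\omega]_{A_1}|Q_j|/\mu(Q_j)$ to conclude $|c_{Q_j}-c_{Q_0}|/\omega(x)\le 2^{(n+1)/r}s[\omega]_{A_1}^{1/r}$ on $Q_j$; this is precisely where $0<r<1$ enters. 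You instead stop on the $\mu$-density of the superlevel set $E=\{g_Q>\lambda_0\}$ and replace the averaging step (which, as you note, is unavailable for $r<1$ via Jensen) by a pigeonhole: a single point of $Q_j$ where $g_Q$, $g_{Q_j}$ and $\omega$ are simultaneously controlled, obtained from two Chebyshev bounds and the reverse H\"older inequality. Both routes produce the same recursion $\Phi(t)\le\theta\,\Phi(t-L)$ and hence the exponential decay. What your version buys is robustness: the drift estimate uses only the weak-$L^r$ bound and general $A_\infty$ machinery, so it is insensitive to whether $r<1$; what the paper's version buys is economy, since it needs nothing beyond the $A_1$ condition and the subadditivity of $t\mapsto t^r$, with explicit constants. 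Two small bookkeeping points in your sketch: three subsets of $Q_j$ each of $\mu$-density $2/3$ are only guaranteed to intersect in a set of measure $\ge 0$, so you should take each complement of density at most, say, $1/4$; and the selection requires $\lambda_0^{-r}\le\eta$ so that $Q$ itself is not a stopping cube, which is compatible with, but must be recorded alongside, the requirement $\theta=\eta^{-1}\lambda_0^{-r}<1$.
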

\begin{proof}
Take any cube $Q$, write $E_{Q}=\{x\in Q: \frac{|f(x)-c_{Q}|}{\omega(x)}>t\}$. Then
\begin{eqnarray*}
\mu(E_{Q})&\leq& \int_{E_{Q}}\frac{|f(x)-c_{Q}|^{r}}{t^{r}\omega(x)^{r}}d\mu(x)\\
&\leq&\frac{1}{t^{r}}\frac{\mu(Q)}{\mu(Q)}\int_{Q}\frac{|f(x)-c_{Q}|^{r}}{\omega(x)^{r}}d\mu(x)\\
&\leq&\frac{1}{t^{r}}\mu(Q).
\end{eqnarray*}
Write $F_{1}(t)=\frac{1}{t^{r}}$, then
$$\mu(E_{Q})\leq F_{1}(t)\mu(Q).$$

Let $s>1$ and $t\in (0,\infty)$ such that $2^{\frac{n+1}{r}}s[\omega]_{A_{1}}^{1/r}\leq t$. Fix a cube $Q_{0}$, there is a Calderon-Zygmund decomposition of disjoint cubes $\{Q_{j}\}$ such that $Q_{j}\subset Q_{0}$ and\\

~~$(i)~~ \displaystyle{s^{r}<\frac{1}{\mu(Q_{j})}\int_{Q_{j}}\Big(\frac{|f(x)-c_{Q_{0}}|}{\omega(x)}\Big)^{r}d\mu(x)\leq 2^{n}s^{r}},$\\

~~$(ii)~~\displaystyle{\frac{|f(x)-c_{Q_{0}}|}{\omega(x)}\leq s}$ for $x\in \big(\bigcup_{j}Q_{j}\big)^{c}$.

Since $\omega\in A_{1}$ and $x\in Q$, then $\frac{1}{\omega(x)}\leq\frac{[\omega]_{A_{1}}|Q|}{\mu(Q)}$. By $(i)$ and $0<r<1$,
\begin{eqnarray*}
\int_{Q_{j}}\Big(\frac{|f(y)-c_{Q_{0}}|}{\omega(y)}\Big)^{r}\omega(y)^{r}dy
&=&\int_{Q_{j}}\Big(\frac{|f(y)-c_{Q_{0}}|}{\omega(y)}\Big)^{r}\omega(y)^{r-1}d\mu(y)\\
&\leq&\Big(\frac{[\omega]_{A_{1}}|Q_{j}|}{\mu(Q_{j})}\Big)^{1-r}\int_{Q_{j}}\Big(\frac{|f(y)-c_{Q_{0}}|}{\omega(y)}\Big)^{r}d\mu(y)\\
&\leq&2^{n}s^{r}[\omega]^{1-r}_{A_{1}}|Q_{j}|^{1-r}\mu(Q_{j})^{r}.
\end{eqnarray*}
Notice that
$$\int_{Q_{j}}\Big(\frac{|f(x)-c_{Q_{j}}|}{\omega(x)}\Big)^{r}d\mu(x)\leq \int_{Q_{j}}\Big(\frac{|f(x)-c_{Q_{0}}|}{\omega(x)}\Big)^{r}d\mu(x),$$
which implies that
$$\int_{Q_{j}}\Big(\frac{|f(y)-c_{Q_{j}}|}{\omega(y)}\Big)^{r}\omega(y)^{r}dy\leq 2^{n}s^{r}[\omega]^{1-r}_{A_{1}}|Q_{j}|^{1-r}\mu(Q_{j})^{r}.$$
Therefore,\begin{eqnarray*}
\Big(\frac{|c_{Q_{j}}-c_{Q_{0}}|}{\omega(x)}\Big)^{r}
&=&\frac{\omega(x)^{-r}}{|Q_{j}|}\int_{Q_{j}}|c_{Q_{j}}-c_{Q_{0}}|^{r}dy\\
&\leq&\frac{1}{|Q_{j}|\omega(x)^{r}}\int_{Q_{j}}\Big(\frac{|f(y)-c_{Q_{j}}|}{\omega(y)}\Big)^{r}\omega(y)^{r}dy\\
&&+\frac{1}{|Q_{j}|\omega(x)^{r}}\int_{Q_{j}}\Big(\frac{|f(y)-c_{Q_{0}}|}{\omega(y)}\Big)^{r}\omega(y)^{r}dy\\
&\leq&2^{n+1}s^{r}[\omega]^{1-r}_{A_{1}}\Big(\frac{\mu(Q_{j})}{|Q_{j}|\omega(x)}\Big)^{r}\\
&\leq&2^{n+1}s^{r}[\omega]_{A_{1}}.
\end{eqnarray*}

From the fact that $[\omega]_{A_{1}}\geq 1$, we have $t>s$, by $(i)$ and $(ii)$, we have
\begin{eqnarray*}
\mu(E_{Q_{0}})&=&\sum_{j}\mu\big(\{x\in Q_{j}:\frac{|f(x)-c_{Q_{0}}|}{\omega(x)}>t\}\big)\\
&\leq&\sum_{j}\mu\big(\{x\in Q_{j}:\frac{|f(x)-c_{Q_{j}}|}{\omega(x)}+\frac{|c_{Q_{j}}-c_{Q_{0}}|}{\omega(x)}>t\}\big)\\
&\leq&\sum_{j}\mu\big(\{x\in Q_{j}:\frac{|f(x)-c_{Q_{j}}|}{\omega(x)}>t-2^{\frac{n+1}{r}}s[\omega]^{1/r}_{A_{1}}\}\big)\\
&\leq&\sum_{j}F_{1}(t-2^{\frac{n+1}{r}}s[\omega]^{1/r}_{A_{1}})\cdot \mu(Q_{j})\\
&\leq&F_{1}(t-2^{\frac{n+1}{r}}s[\omega]^{1/r}_{A_{1}})\sum_{j}\frac{1}{s^{r}}\int_{Q_{j}}\Big(\frac{|f(x)-c_{Q_{0}}|}{\omega(x)}\Big)^{r}d\mu(x)\\
&\leq&\frac{F_{1}(t-2^{\frac{n+1}{r}}s[\omega]^{1/r}_{A_{1}})}{s^{r}}\int_{Q_{0}}\Big(\frac{|f(x)-c_{Q_{0}}|}{\omega(x)}\Big)^{r}d\mu(x)\\
&\leq&\frac{F_{1}(t-2^{\frac{n+1}{r}}s[\omega]^{1/r}_{A_{1}})}{s^{r}}\mu(Q_{0}).
\end{eqnarray*}
Let
$$F_{2}(t)= \frac{F_{1}(t-2^{\frac{n+1}{r}}s[\omega]^{1/r}_{A_{1}})}{s^{r}}.$$
Continue this process indefinitely, we obtain for any $k\geq 2$,
$$F_{k}(t)=\frac{F_{k-1}(t-2^{\frac{n+1}{r}}s[\omega]^{1/r}_{A_{1}})}{s^{r}},$$
and
$$\mu(E_{Q_{0}})\leq F_{k}(t)\mu(Q_{0}).$$

We fix a constant $t>0$. If
$$k\cdot2^{\frac{n+1}{r}}[\omega]^{1/r}_{A_{1}}s<t\leq (k+1)\cdot2^{\frac{n+1}{r}}[\omega]^{1/r}_{A_{1}}s.$$
for some $k\geq 1$, thus
\begin{eqnarray*}
\mu(E_{Q_{0}})&\leq& \mu\big(\{x\in Q_{0}:\frac{|f(x)-c_{Q_{0}}|}{\omega(x)}>t\}\big)\\
&\leq&\mu\big(\{x\in Q_{0}:\frac{|f(x)-c_{Q_{0}}|}{\omega(x)}>k\cdot2^{\frac{n+1}{r}}[\omega]^{1/r}_{A_{1}}s\}\big)\\
&\leq&F_{k}(k\cdot2^{\frac{n+1}{r}}[\omega]^{1/r}_{A_{1}}s)\mu(Q_{0})\\
&=&\frac{F_{1}(2^{\frac{n+1}{r}}[\omega]^{1/r}_{A_{1}}s)}{s^{(k-1)r}}\mu(Q_{0})\\
&=&\frac{1}{2^{n+1}[\omega]_{A_{1}}s^{kr}}\mu(Q_{0})\\
&\leq&\frac{e^{-kr\log s}}{2^{n+1}[\omega]_{A_{1}}}\mu(Q_{0})\\
&\leq&\frac{e^{r\log s}}{2^{n+1}[\omega]_{A_{1}}}\exp\Big({-\frac{tr\log s}{2^{\frac{n+1}{r}}[\omega]^{1/r}_{A_{1}}s}}\Big)\mu(Q_{0})\\
\end{eqnarray*}
Since $-k\leq 1-\frac{t}{2^{\frac{n+1}{r}}[\omega]^{1/r}_{A_{1}}s}$. If $t\leq 2^{\frac{n+1}{r}}[\omega]^{1/r}_{A_{1}}s$, then use the trivial estimate
$$\mu(E_{Q_{0}})\leq \mu(Q_{0})\leq e^{-t}e^{2^{\frac{n+1}{r}}[\omega]^{1/r}_{A_{1}}s}\mu(Q_{0}).$$
Recall that $s$ is any real number larger than 1. Choosing $s=e$, this yields
$$\mu\Big(\big\{x\in Q: \frac{|f(x)-c_{Q}|}{\omega(x)}>t\big\}\Big)\leq c_{1}e^{-c_{2}t}\mu(Q),$$
for some positive constants $c_{1}$ and $c_{2}$, which proves the inequality of the lemma \ref{lem2}.
\end{proof}

\begin{lemma}\label{lem2.1}
Let $\omega\in A_{1}$ and $0<r<1$. Then
$${\rm BMO}^{r}(\omega)={\rm BMO}_{r}(\omega).$$
The norms are mutually equivalent.
\end{lemma}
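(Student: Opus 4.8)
The plan is to establish the two inclusions separately. The inclusion ${\rm BMO}^{r}(\omega)\subseteq{\rm BMO}_{r}(\omega)$, together with the bound $\|f\|_{{\rm BMO}_{r}(\omega)}\leq\|f\|_{{\rm BMO}^{r}(\omega)}$, is immediate. Rewriting the defining average as
$$\frac{1}{\mu(Q)}\int_{Q}\Big(\frac{|f(x)-c|}{\omega(x)}\Big)^{r}d\mu(x)=\frac{1}{\omega(Q)}\int_{Q}|f(x)-c|^{r}\omega(x)^{1-r}dx,$$
the infimum over all constants $c$ that defines ${\rm BMO}_{r}(\omega)$ is no larger than its value at the particular choice $c=f_{Q}$, which is precisely the quantity used by ${\rm BMO}^{r}(\omega)$.

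The content lies in the reverse inequality $\|f\|_{{\rm BMO}^{r}(\omega)}\leq C\|f\|_{{\rm BMO}_{r}(\omega)}$. By homogeneity I would normalize $\|f\|_{{\rm BMO}_{r}(\omega)}=1$ and, for each cube $Q$, let $c_{Q}$ be the minimizing constant supplied by Lemma \ref{lem2}. Since $0<r<1$, the subadditivity $|a+b|^{r}\leq|a|^{r}+|b|^{r}$ gives
$$\frac{1}{\omega(Q)}\int_{Q}|f-f_{Q}|^{r}\omega^{1-r}dx\leq\frac{1}{\omega(Q)}\int_{Q}|f-c_{Q}|^{r}\omega^{1-r}dx+|c_{Q}-f_{Q}|^{r}\cdot\frac{1}{\omega(Q)}\int_{Q}\omega^{1-r}dx.$$
The first term on the right is at most $\|f\|^{r}_{{\rm BMO}_{r}(\omega)}=1$ by the defining property of $c_{Q}$, so everything reduces to controlling the second term.

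For the second term I would bound each factor in turn. Applying the layer-cake formula to the exponential tail estimate of Lemma \ref{lem2} yields
$$\frac{1}{\mu(Q)}\int_{Q}\frac{|f-c_{Q}|}{\omega}\,d\mu\leq\int_{0}^{\infty}c_{1}e^{-c_{2}t}\,dt=\frac{c_{1}}{c_{2}}=:C_{0}.$$
Since
$$|f_{Q}-c_{Q}|\leq\frac{1}{|Q|}\int_{Q}|f-c_{Q}|\,dx=\frac{1}{|Q|}\int_{Q}\frac{|f-c_{Q}|}{\omega}\,d\mu=\frac{\mu(Q)}{|Q|}\cdot\frac{1}{\mu(Q)}\int_{Q}\frac{|f-c_{Q}|}{\omega}\,d\mu,$$
this gives $|f_{Q}-c_{Q}|\leq C_{0}\,\omega(Q)/|Q|$. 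For the weight factor, H\"older's inequality with exponents $\tfrac{1}{1-r}$ and $\tfrac{1}{r}$ gives $\int_{Q}\omega^{1-r}dx\leq\omega(Q)^{1-r}|Q|^{r}$, hence $\frac{1}{\omega(Q)}\int_{Q}\omega^{1-r}dx\leq(|Q|/\omega(Q))^{r}$. Multiplying the two bounds, the factors $\omega(Q)/|Q|$ cancel and the second term is dominated by $C_{0}^{r}$, uniformly in $Q$. Taking the supremum over $Q$ and raising to the power $1/r$ then produces $\|f\|_{{\rm BMO}^{r}(\omega)}\leq(1+C_{0}^{r})^{1/r}$.

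The main obstacle is the passage from the geometric minimizer $c_{Q}$ to the arithmetic mean $f_{Q}$, i.e.\ the estimate on $|c_{Q}-f_{Q}|$. A direct appeal to the definition of ${\rm BMO}_{r}(\omega)$ controls only an $r$-th power with $r<1$, which is too weak to bound the first moment of $|f-c_{Q}|/\omega$; it is precisely the exponential decay of Lemma \ref{lem2}, a John--Nirenberg type self-improvement, that upgrades this to the required first-moment bound $C_{0}$. The $A_{1}$ hypothesis enters only implicitly here, through Lemma \ref{lem2}; the remaining cancellation between $|c_{Q}-f_{Q}|^{r}$ and the weighted volume $\frac{1}{\omega(Q)}\int_{Q}\omega^{1-r}dx$ is purely algebraic and needs nothing beyond H\"older's inequality.
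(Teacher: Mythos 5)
Your proposal is correct and follows essentially the same route as the paper: both reduce the problem to the exponential tail estimate of Lemma \ref{lem2}, integrate it via the layer-cake formula to control the first moment $\frac{1}{\omega(Q)}\int_{Q}|f-c_{Q}|\,dx$, and then pass from $c_{Q}$ to $f_{Q}$ using the subadditivity of $t\mapsto t^{r}$. The only difference is presentational: you make explicit (via $|f_{Q}-c_{Q}|\leq\frac{1}{|Q|}\int_{Q}|f-c_{Q}|\,dx$ and H\"older with exponents $\tfrac{1}{1-r}$, $\tfrac{1}{r}$) the cancellation that the paper's displayed inequality $\frac{1}{\omega(Q)}\int_{Q}\big(\frac{|c_{Q}-f_{Q}|}{\omega(x)}\big)^{r}\omega(x)\,dx\leq\big(\frac{1}{\omega(Q)}\int_{Q}|f-c_{Q}|\,dx\big)^{r}$ leaves implicit.
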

\begin{proof}
By Lemma \ref{lem2} and the homogeneity of $\|\cdot\|_{{\rm BMO}_{r}(\omega)}$, we obtain that for any $f\in {\rm BMO}_{r}(\omega)$,
$$\omega\big(\{x\in Q: \frac{|f(x)-c_{Q}|}{\omega(x)}>t\}\big)\leq c_{1}\exp{(-c_{2}t/\|f\|_{{\rm BMO}_{r}(\omega)})}\omega(Q).$$
This gives us
\begin{eqnarray*}
\frac{1}{\omega(Q)}\int_{Q}|f(x)-c_{Q}|dx
&=&\frac{1}{\omega(Q)}\int_{0}^{\infty}\omega\big(\{x\in Q: \frac{|f(x)-c_{Q}|}{\omega(x)}>t\}\big)dt\\
&\leq&\frac{1}{\omega(Q)}\int_{0}^{\infty}c_{1}\exp{(-c_{2}t/\|f\|_{{\rm BMO}_{r}(\omega)})}\omega(Q)dt\\
&\leq&C\|f\|_{{\rm BMO}_{r}(\omega)}.
\end{eqnarray*}
Therefore,
\begin{eqnarray*}
\frac{1}{\omega(Q)}\int_{Q}\Big(\frac{|f(x)-f_{Q}|}{\omega(x)}\Big)^{r}\omega(x)dx
&\leq&\frac{1}{\omega(Q)}\int_{Q}\Big(\frac{|f(x)-c_{Q}|}{\omega(x)}\Big)^{r}\omega(x)dx\\
&&+\frac{1}{\omega(Q)}\int_{Q}\Big(\frac{|c_{Q}-f_{Q}|}{\omega(x)}\Big)^{r}\omega(x)dx\\
&\leq&\|f\|^{r}_{{\rm BMO}_{r}(\omega)}+\bigg(\frac{1}{\omega(Q)}\int_{Q}|f(x)-c_{Q}|dx\bigg)^{r}\\
&\leq&C\|f\|^{r}_{{\rm BMO}_{r}(\omega)}.
\end{eqnarray*}
Conversely, $\|\cdot\|_{{\rm BMO}_{r}(\omega)}\leq \|\cdot\|_{{\rm BMO}^{r}(\omega)}$ is obvious. Thus, the equivalence of $\|\cdot\|_{{\rm BMO}_{r}(\omega)}$ and $\|\cdot\|_{{\rm BMO}^{r}(\omega)}$ is shown.
\end{proof}
\vspace{0.5cm}

Standard real analysis tools as the maximal function $M(f)$, the weighted maximal function $M_{\omega}(f)$ and the sharp maximal function $M^{\sharp}(f)$ carries over to this context, namely,
$$M(f)(x)=\sup_{Q\ni x}\frac{1}{|Q|}\int_{Q}|f(y)|dy;$$
$$M_{\omega}(f)(x)=\sup_{Q\ni x}\frac{1}{\omega(Q)}\int_{Q}|f(y)|\omega(y)dy;$$
$$M^{\sharp}(f)(x)=\sup_{Q\ni x}\inf_{c}\frac{1}{|Q|}\int_{Q}|f(y)-c|dy\approx \sup_{Q\ni x}\frac{1}{|Q|}\int_{Q}|f(y)-f_{Q}|dy.$$
A variant of weighted maximal function and sharp maximal operator $M_{\omega,s}(f)(x)=\big(M_{\omega}(f^{s})\big)^{1/s}$ and $M_{\delta}^{\sharp}(f)(x)=\big(M^{\sharp}(f^{\delta})(x)\big)^{1/\delta}$, which will become the main tool in our scheme.

The following relationships between $M_{\delta}$ and $M_{\delta}^{\sharp}$ to be used is a version of the classical ones
due to Fefferman and Stein \cite{FS}.
\begin{lemma}\label{lem3}
Let $0<p,\delta<\infty$ and $\omega\in A_{\infty}$. There exists a positive $C$ such that
$$\int_{\mathbb{R}^n}(M_{\delta}f(x))^{p}\omega(x)dx\leq C\int_{\mathbb{R}^n}(M^{\sharp}_{\delta}f(x))^{p}\omega(x)dx,$$
for any smooth function $f$ for which the left-hand side is finite.
\end{lemma}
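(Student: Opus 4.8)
The plan is to follow the classical good-$\lambda$ method of Fefferman and Stein, adapted to the $A_{\infty}$ weight. First I would reduce to the case $\delta=1$: writing $g=|f|^{\delta}$ and $q=p/\delta$, we have $\int(M_{\delta}f)^{p}\omega=\int(Mg)^{q}\omega$ and $\int(M^{\sharp}_{\delta}f)^{p}\omega=\int(M^{\sharp}g)^{q}\omega$, so it suffices to prove that $\int_{\mathbb{R}^n}(Mg)^{q}\omega\leq C\int_{\mathbb{R}^n}(M^{\sharp}g)^{q}\omega$ for every $q\in(0,\infty)$ and every nonnegative $g$ for which the left-hand side is finite.

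The heart of the argument is a weighted good-$\lambda$ inequality: I claim there exist $\theta>0$ and $C>0$, depending only on $n$ and $[\omega]_{A_{\infty}}$, such that for all $\lambda>0$ and all small $\gamma>0$,
$$\omega\big(\{x:Mg(x)>2\lambda,\ M^{\sharp}g(x)\leq\gamma\lambda\}\big)\leq C\gamma^{\theta}\,\omega\big(\{x:Mg(x)>\lambda\}\big).$$
To prove this, I would perform a Calder\'{o}n--Zygmund/Whitney decomposition of the open set $\Omega_{\lambda}=\{Mg>\lambda\}$ into disjoint cubes $\{Q_{j}\}$, and on each cube show by a local argument (comparing $g$ to its average and exploiting that $M^{\sharp}g\leq\gamma\lambda$ somewhere in $Q_{j}$) that the Lebesgue measure of $\{Mg>2\lambda,\ M^{\sharp}g\leq\gamma\lambda\}\cap Q_{j}$ is at most $C\gamma|Q_{j}|$. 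The crucial step using the weight is the $A_{\infty}$ property, which furnishes constants $C,\theta>0$ with
$$\frac{\omega(E)}{\omega(Q)}\leq C\Big(\frac{|E|}{|Q|}\Big)^{\theta}$$
for every cube $Q$ and every measurable $E\subseteq Q$; applying this with $E=\{Mg>2\lambda,\ M^{\sharp}g\leq\gamma\lambda\}\cap Q_{j}$ upgrades the Lebesgue bound $C\gamma|Q_{j}|$ to $C\gamma^{\theta}\omega(Q_{j})$, and summing over $j$ yields the claim.

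Finally I would integrate the good-$\lambda$ inequality. Expressing the $L^{q}(\omega)$ norms via the distribution function, multiplying by $q\lambda^{q-1}$, and performing the change of variables $\lambda\mapsto2\lambda$ and $\lambda\mapsto\gamma^{-1}\lambda$ gives
$$\int_{\mathbb{R}^n}(Mg)^{q}\omega\leq 2^{q}C\gamma^{\theta}\int_{\mathbb{R}^n}(Mg)^{q}\omega+2^{q}\gamma^{-q}\int_{\mathbb{R}^n}(M^{\sharp}g)^{q}\omega.$$
Choosing $\gamma$ small enough that $2^{q}C\gamma^{\theta}\leq\tfrac{1}{2}$, and using the hypothesis $\int(Mg)^{q}\omega<\infty$ to absorb the first term into the left-hand side, produces the desired inequality.

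The main obstacle I expect is the good-$\lambda$ inequality itself: establishing the local Lebesgue estimate $|\{Mg>2\lambda,\ M^{\sharp}g\leq\gamma\lambda\}\cap Q_{j}|\leq C\gamma|Q_{j}|$ requires splitting $Mg$ into a local part (controlled by $M^{\sharp}g$ on $Q_{j}$) and a far part (controlled by the fact that $Q_{j}$ comes from the decomposition of $\{Mg>\lambda\}$, so the average of $g$ over a fixed dilate of $Q_{j}$ is $\lesssim\lambda$). The finiteness hypothesis on the left-hand side is essential for the absorption step and cannot be dispensed with.
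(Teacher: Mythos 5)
The paper gives no proof of this lemma at all---it is simply cited as ``a version of the classical ones due to Fefferman and Stein \cite{FS}''---and your good-$\lambda$ argument is precisely the standard proof behind that citation. The outline is correct: the reduction to $\delta=1$ via $g=|f|^{\delta}$, the weighted good-$\lambda$ inequality obtained by combining the local Lebesgue-measure estimate on Whitney cubes with the $A_{\infty}$ comparison $\omega(E)/\omega(Q)\leq C\big(|E|/|Q|\big)^{\theta}$, and the absorption step using the finiteness hypothesis are all in order; the only routine wrinkles are that the constant $2$ in $\{Mg>2\lambda\}$ typically has to be replaced by a larger dimensional constant (or one passes through the dyadic maximal function), and that the absorption should formally be carried out on truncated integrals $\int_{0}^{N}$ before letting $N\to\infty$.
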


\begin{lemma}\label{lem4}
Let $\omega\in A_{1}$ and $b\in {\rm BMO}(\omega)$. Then, there exists a constant $C$ such that
\begin{eqnarray}\label{eq1}
\begin{split}
M^{\sharp}_{\frac{1}{2}}\big([b,T]_{1}(f_{1},f_{2})\big)(x)
&\leq C\|b\|_{{\rm BMO}(\omega)}\omega(x)M(T(f_{1},f_{2})(x))\\
&\quad+C\|b\|_{{\rm BMO}(\omega)}\omega(x)M_{\omega,s}(f_{1})(x)M(f_{2})(x),
\end{split}
\end{eqnarray}

\begin{eqnarray}\label{eq2}
\begin{split}
M^{\sharp}_{\frac{1}{2}}\big([b,T]_{2}(f_{1},f_{2})\big)(x)
&\leq C\|b\|_{{\rm BMO}(\omega)}\omega(x)M(T(f_{1},f_{2})(x))\\
&\quad+C\|b\|_{{\rm BMO}(\omega)}\omega(x)M(f_{1})(x)M_{\omega,s}(f_{2})(x),
\end{split}
\end{eqnarray}
and
\begin{eqnarray}\label{eq3}
\begin{split}
M^{\sharp}_{1/3}\big([\Pi\vec{b},T](f_{1},f_{2})\big)(x)&\leq C \omega(x)^{2}\|b\|^{2}_{{\rm BMO}(\omega)}M\big(T(f_{1},f_{2})\big)(x)\\
&\quad+C\omega(x)\|b\|_{{\rm BMO}(\omega)}M_{1/2}\big([b,T]_{1}(f_{1},f_{2})\big)(x)\\
&\quad+C\omega(x)\|b\|_{{\rm BMO}(\omega)}M_{1/2}\big([b,T]_{2}(f_{1},f_{2})\big)(x)\\
&\quad+C\omega(x)^{2}\|b\|^{2}_{{\rm BMO}(\omega)}M_{\omega,s}(f_{1})(x)M_{\omega,s}(f_{2})(x),
\end{split}
\end{eqnarray}
for any $1<s<\infty$ and bounded compact supported functions $f_{1},f_{2}$.
\end{lemma}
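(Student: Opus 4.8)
The plan is to prove all three estimates by one common cube-by-cube scheme. Since each exponent $\delta\in\{1/2,1/3\}$ lies in $(0,1)$, I first \emph{linearize} the sharp maximal function: from the elementary inequality $\big|\,|a|^{\delta}-|b|^{\delta}\,\big|\le|a-b|^{\delta}$ one gets $M^{\sharp}_{\delta}(g)(x)\le\big(\sup_{Q\ni x}\inf_{c}\tfrac{1}{|Q|}\int_{Q}|g(z)-c|^{\delta}\,dz\big)^{1/\delta}$, so it suffices to bound a single average over each cube $Q\ni x$. For \eqref{eq1} I use the identity, valid for every constant $\lambda$, $[b,T]_{1}(f_{1},f_{2})=(b-\lambda)T(f_{1},f_{2})-T((b-\lambda)f_{1},f_{2})$, and take $\lambda=b_{Q}$. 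Writing $f_{i}=f_{i}^{0}+f_{i}^{\infty}$ with $f_{i}^{0}=f_{i}\chi_{3Q}$, the term $T((b-\lambda)f_{1},f_{2})$ splits into four pieces according to the local and global parts of $f_{1},f_{2}$; the free constant $c$ is chosen to absorb the three pieces carrying at least one global factor, each evaluated at the centre $x_{Q}$.

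Two structural facts drive every estimate. First, the $A_{1}$ condition gives $\tfrac{\omega(E)}{|E|}\le[\omega]_{A_{1}}\,\omega(x)$ for a.e.\ $x\in E$, and this is exactly what converts each averaged density into the pointwise factor $\omega(x)$ on the right-hand side. Second, the norm equivalence ${\rm BMO}(\omega)={\rm BMO}^{s'}(\omega)$ (Garc\'{i}a-Cuerva together with Theorem \ref{main2}) lets me run a generalized H\"older inequality with respect to $d\mu=\omega\,dx$: for the local term I bound $\tfrac{1}{|Q|}\int_{3Q}|b-b_{Q}|\,|f_{1}|$ by splitting $|b-b_{Q}|/\omega$ and $|f_{1}|$ in $L^{s'}(\mu)$ and $L^{s}(\mu)$, which produces $\|b\|_{{\rm BMO}(\omega)}$ from the first factor and $M_{\omega,s}(f_{1})(x)$ from the second, while the prefactor $\omega(3Q)/|Q|$ becomes $\omega(x)$ via $A_{1}$.

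With these tools the pieces are estimated as follows. The term $(b-b_{Q})T(f_{1},f_{2})$ is handled by Jensen and H\"older with exponents $(2,2)$, giving $\omega(x)\|b\|_{{\rm BMO}(\omega)}M(T(f_{1},f_{2}))(x)$. The local-local piece $T((b-b_{Q})f_{1}^{0},f_{2}^{0})$ is controlled by Kolmogorov's inequality together with the endpoint bound $T:L^{1}\times L^{1}\to L^{1/2,\infty}$ and the generalized H\"older above, yielding $\omega(x)\|b\|_{{\rm BMO}(\omega)}M_{\omega,s}(f_{1})(x)M(f_{2})(x)$. The three pieces carrying a global factor are treated by the size and regularity conditions on $K$: decomposing the global region into dyadic annuli $2^{k}Q$, the kernel decay $\big(|x-y_{1}|+|x-y_{2}|\big)^{-2n}$ and its H\"older analogue beat the $A_{1}$-doubling of $\omega(2^{k}Q)$ and the linear-in-$k$ growth of $\tfrac{1}{\omega(2^{k}Q)}\int_{2^{k}Q}|b-b_{Q}|$, so the series sums to the same product of maximal functions. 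Estimate \eqref{eq2} is identical after interchanging $f_{1}$ and $f_{2}$.

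For the iterated estimate \eqref{eq3} with $\vec b=(b,b)$ I expand $[\Pi\vec b,T]=[b,[b,T]_{1}]_{2}$ and, subtracting $\lambda=b_{Q}$, write it as $(b-\lambda)^{2}T(f_{1},f_{2})$, plus two cross terms of the form $(b-\lambda)T((b-\lambda)f_{j},\cdot)$, plus the doubly-iterated piece $T((b-\lambda)f_{1},(b-\lambda)f_{2})$. Re-expanding each cross term through the single-commutator identity turns it into $(b-\lambda)^{2}T$ plus $(b-\lambda)[b,T]_{j}(f_{1},f_{2})$; the squared factors produce $\omega(x)^{2}\|b\|^{2}_{{\rm BMO}(\omega)}M(T(f_{1},f_{2}))(x)$, the commutator factors produce, after H\"older, the terms $\omega(x)\|b\|_{{\rm BMO}(\omega)}M_{1/2}([b,T]_{j}(f_{1},f_{2}))(x)$ appearing on the right, and the doubly-iterated piece produces $\omega(x)^{2}\|b\|^{2}_{{\rm BMO}(\omega)}M_{\omega,s}(f_{1})(x)M_{\omega,s}(f_{2})(x)$ via a \emph{double} generalized H\"older; the exponent drops to $1/3$ to leave room for the product of two commutator factors. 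I expect the main obstacle to be the endpoint exponent bookkeeping: one must check that $\delta=1/2$ (respectively $1/3$) is admissible in Kolmogorov's inequality against the weak-type bound $L^{1}\times L^{1}\to L^{1/2,\infty}$ of $T$, which is borderline at $1/2$ and genuinely needs the extra exponential integrability of $b-b_{Q}$ coming from the weighted John--Nirenberg content of the equivalence ${\rm BMO}(\omega)\approx{\rm BMO}^{p}(\omega)$, and one must verify that \emph{every} average of $|b-b_{Q}|$ can be upgraded, through that equivalence and the $A_{1}$ pointwise bound, into the clean factor $\omega(x)\|b\|_{{\rm BMO}(\omega)}$ with the correct weighted maximal operator $M_{\omega,s}$ left acting on the untouched $f_{i}$.
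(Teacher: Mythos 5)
Your proposal follows essentially the same route as the paper: the pointwise bound is obtained cube by cube from the splitting $[b,T]_{1}=(b-\lambda)T(f_{1},f_{2})-T((b-\lambda)f_{1},f_{2})$ with $\lambda$ a mean of $b$, the first piece handled by H\"older plus the $A_{1}$ bound $\omega(Q)/|Q|\leq C\omega(x)$, the local part of the second by Kolmogorov against $T\colon L^{1}\times L^{1}\to L^{1/2,\infty}$ with a generalized H\"older in $d\mu=\omega\,dx$ producing $\|b\|_{{\rm BMO}^{s'}(\omega)}M_{\omega,s}(f_{1})M(f_{2})$, and the far part by the kernel regularity over dyadic annuli with the constant $c$ chosen as an average over $Q$. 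The only cosmetic difference is that you split the functions $f_{i}=f_{i}^{0}+f_{i}^{\infty}$ into four pieces while the paper splits the integration region of $(y_{1},y_{2})$ into $\Omega_{0}$ and annuli $\Omega_{k}$, and you rightly flag the endpoint bookkeeping at $\delta=1/2$ that the paper passes over silently.
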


\begin{proof}
we only prove (\ref{eq1}) and the proof of (\ref{eq2}) and (\ref{eq3}) are very similar to that of (\ref{eq1}). Let $Q:=Q(x_{0},r)$ be a cube and $x\in Q$. Then,
\begin{eqnarray*}
&&\bigg(\frac{1}{|Q|}\int_{Q}\Big|\big|[b,T]_{1}(f_{1},f_{2})(z)\big|^{1/2}-|c|^{1/2}\Big|dz\bigg)^{2}\\
&&\leq C\bigg(\frac{1}{|Q|}\int_{Q}\big|[b,T]_{1}(f_{1},f_{2})(z)-c\big|^{1/2}dz\bigg)^{2}\\
&&\leq C\bigg(\frac{1}{|Q|}\int_{Q}\big|(b(z)-\lambda)T(f_{1},f_{2})(z)\big|^{1/2}dz\bigg)^{2}\\
&&\quad+\bigg(\frac{1}{|Q|}\int_{Q}\big|T((b-\lambda)f_{1},f_{2})(z)-c\big|^{1/2}dz\bigg)^{2}\\
&&=:A_{1}+A_{2},
\end{eqnarray*}
where $\lambda=b_{2Q}$.

We first consider the term $A_{1}$. By H\"{o}lder inequality, we obtain that
\begin{eqnarray*}
A_{1}&=&\bigg(\frac{1}{|Q|}\int_{Q}\Big|(b(z)-\lambda)T(f_{1},f_{2})(z)\Big|^{1/2}dz\bigg)^{2}\\
&\leq& C\|b\|_{{\rm BMO}(\omega)}\frac{\omega(Q)}{|Q|}
\cdot\frac{1}{|Q|}\int_{Q}\big|T(f_{1},f_{2})(z)\big|dz\\
&\leq& C\omega(x)\|b\|_{{\rm BMO}(\omega)}M\big(T(f_{1},f_{2})\big)(x).
\end{eqnarray*}

Let us consider next the term $A_{2}$. Let
$$\Omega_{0}=\{(y_{1},y_{2})\in\mathbb{R}^n\times\mathbb{R}^n: |x_{0}-y_{1}|+|x_{0}-y_{2}|\leq 2\sqrt{n}r\}$$
and for $k\geq 1$,
$$\Omega_{k}=\{(y_{1},y_{2})\in\mathbb{R}^n\times\mathbb{R}^n: 2^{k+1}\sqrt{n}\geq |x_{0}-y_{1}|+|x_{0}-y_{2}|> 2^{k}\sqrt{n}r\}.$$
We write
\begin{eqnarray*}
A_{2}&\leq &\bigg(\frac{1}{|Q|}\int_{Q}
\bigg|\iint_{\Omega_{0}}(b(y_{1})-\lambda)K(z-y_{1},z-y_{2})f_{1}(y_{1})f_{2}(y_{2})dy_{1}dy_{2}\bigg|^{1/2}dz\bigg)^{2}\\
&&+\bigg(\frac{1}{|Q|}\int_{Q}\bigg|\iint_{\mathbb{R}^n\times\mathbb{R}^n\backslash\Omega_{0}}
(b(y_{1})-\lambda)K(z-y_{1},z-y_{2})f_{1}(y_{1})f_{2}(y_{2})dy_{1}dy_{2}-c\bigg|^{1/2}dz\bigg)^{2}\\
&=:&A_{21}+A_{22}.
\end{eqnarray*}

It is obvious that $\Omega_{0}\subset 4\sqrt{n}Q\times 4\sqrt{n}Q$, we write $f^{0}_{i}=f_{i}\chi_{4\sqrt{n}Q}$. By Kolmogorov inequality and the fact that $T$ is bounded from $L^{1}\times L^{1}$ to $L^{1/2,\infty}$, we get
\begin{eqnarray*}
A_{21}&\leq &\frac{1}{|Q|^{2}}\|T((b-b_{Q})f^{0}_{1},f^{0}_{2})\|_{L^{1/2,\infty}}\\
&\leq &\frac{C}{|Q|^{2}}\int_{4\sqrt{n}Q}|b(y_{1})-b_{Q}||f_{1}(y_{1})|dy_{1}\int_{4\sqrt{n}Q}|f_{2}(y_{2})|dy_{2}\\
&\leq &C\omega(x)\|b\|_{{\rm BMO}^{s'}(\omega)}M_{\omega,s}(f_{1})(x)M(f_{2})(x).
\end{eqnarray*}

Let
$$c=\frac{1}{|Q|}\int_{Q}\iint_{\mathbb{R}^n\times\mathbb{R}^n\backslash \Omega_{0}}(b(y_{1})-\lambda)K(z'-y_{1},z'-y_{2})f_{1}(y_{1})f_{2}(y_{2})dz'$$
For any $z,z'\in Q$ and $y_{1},y_{2}$ such that $|x_{0}-y_{1}|+|x_{0}-y_{2}|> 2\sqrt{n}r$, then
$$|z-z'|\leq \frac{1}{2}\sqrt{n}r\leq \frac{1}{2}\max\{|x_{0}-y_{1}|,|x_{0}-y_{2}|\},$$
which gives us that
$$\Big|K(z-y_{1},z-y_{2})-K(z'-y_{1},z'-y_{2})\Big|\leq \frac{C|z-z'|^{\gamma}}{\big(|z-y_{1}|+|z-y_{2}|\big)^{2n+\gamma}}.$$
Therefore,
\begin{eqnarray*}
A_{22}&\leq&C \sum_{k=1}^{\infty}\frac{r^{\gamma}}{|Q|}\int_{Q}\iint_{\Omega_{k}}
\frac{|b(y_{1})-\lambda||f_{1}(y_{1})||f_{2}(y_{2})|}{\big(|z-y_{1}|+|z-y_{2}|\big)^{2n+\gamma}}dy_{1}dy_{2}dz\\
&\leq& C\sum_{k=1}^{\infty}\Big(\frac{1}{2^{kn}}\Big)^{\gamma}
\cdot\frac{1}{|2^{k}Q|^{2}}\int_{2^{k+1}\sqrt{n}Q}\int_{2^{k+1}\sqrt{n}Q}|b(y_{1})-\lambda||f_{1}(y_{1})||f_{2}(y_{2})|dy_{1}dy_{2}\\
&\leq& C\omega(x)\|b\|_{{\rm BMO}^{s'}(\omega)}M_{\omega,s}(f_{1})(x)M(f_{2})(x).
\end{eqnarray*}

Collecting our estimates, we have shown that
\begin{eqnarray*}
M^{\sharp}_{\frac{1}{2}}\big([b,T]_{1}(f_{1},f_{2})\big)(x)&\leq & C\|b\|_{{\rm BMO}(\omega)}\omega(x)M(T(f_{1},f_{2}))(x)\\
&&+C\|b\|_{{\rm BMO}(\omega)}\omega(x)M_{\omega,s}(f_{1})(x)M(f_{2})(x)
\end{eqnarray*}
for any $1<s<\infty$ and bounded compact supported functions $f_{1},f_{2}$.
\end{proof}

\section{Proof of Theorem \ref{main1} $\sim$ Theorem \ref{t1}}

{\it Proof of Theorem \ref{main1}.}
Let $1<p<\infty$, $\omega\in A_{1}$ and $X=L^{p,\infty}(\omega)$. By Lemma \ref{lem1}, we have
$$\|\cdot\|_{{\rm BMO}(\omega)}\leq C\|\cdot\|_{{\rm BMO}_{X}(\omega)}.$$
From the fact that ${\rm BMO}(\omega)={\rm BMO}^{p}(\omega)$ and $\|\cdot\|_{{\rm BMO}_{X}(\omega)}\leq \|\cdot\|_{{\rm BMO}^{p}(\omega)}$, it follows that
$$\|\cdot\|_{{\rm BMO}_{X}(\omega)}\leq C\|\cdot\|_{{\rm BMO}(\omega)}.$$
Thus we complete the proof of Theorem \ref{main1}. \qed
\vspace{0.5cm}

{\it Proof of Theorem \ref{main2}.}
Let $f\in {\rm BMO}^{r}(\omega)$. In the proof of lemma \ref{lem2.1}, we have shown that
\begin{eqnarray*}
\frac{1}{\omega(Q)}\int_{Q}|f(x)-c_{Q}|dx
\leq C\|f\|_{{\rm BMO}_{r}(\omega)}.
\end{eqnarray*}
Therefore,
\begin{eqnarray*}
\frac{1}{\omega(Q)}\int_{Q}|f(x)-f_{Q}|dx&\leq&
\frac{2}{\omega(Q)}\int_{Q}|f(x)-c_{Q}|dx\\
&\leq& C\|f\|_{{\rm BMO}_{r}(\omega)}\leq C\|f\|_{{\rm BMO}^{r}(\omega)}.
\end{eqnarray*}
As a result, $\|f\|_{{\rm BMO}(\omega)}\leq C\|f\|_{{\rm BMO}^{r}(\omega)}$. The opposite inequality is a consequence of H\"{o}lder inequality, then the equivalence of $\|f\|_{{\rm BMO}(\omega)}$ and $\|f\|_{{\rm BMO}^{r}(\omega)}$ is shown. \qed

\vspace{0.5cm}
{\it Proof of Theorem \ref{t1}.}
$(a1)\Rightarrow (a2)$: Since $\omega\in A_{1}$, then $\omega^{1-p}\in A_{\infty}$. By Lemma \ref{lem3} and Lemma \ref{lem4} with $1<s<\min\{p_{1},p_{2}\}$, from a standard argument that we can obtain
\begin{eqnarray*}
\|[\Sigma \vec{b},T](f_{1},f_{2})\omega^{-1}\|_{L^{p}(\omega)}
&=&\|[\Sigma \vec{b},T](f_{1},f_{2})\|_{L^{p}(\omega^{1-p})}\\
&\leq&\|M_{\frac{1}{2}}\big([\Sigma \vec{b},T](f_{1},f_{2})\big)\|_{L^{p}(\omega^{1-p})}\\
&\leq&C\|M^{\sharp}_{\frac{1}{2}}\big([\Sigma \vec{b},T](f_{1},f_{2})\big)\|_{L^{p}(\omega^{1-p})}\\
&\leq&C\|b\|_{{\rm BMO}(\omega)}\big\|M\big(T(f_{1},f_{2})\big)\big\|_{L^{p}(\omega)}\\
&&+C\|b\|_{{\rm BMO}(\omega)}\|M(f_{1})(x)M_{\omega,s}(f_{2})\|_{L^{p}(\omega)}\\
&\leq&C\|b\|_{{\rm BMO}(\omega)}\prod_{i=1}^{2}\|f_{i}\|_{L^{p_{i}}(\omega)}.
\end{eqnarray*}

We observe that to use the Fefferman-Stein inequality, one needs to verify that certain terms in the left-hand side of the inequalities are finite. We can assume that $f_{1},f_{2}$ are bounded functions with compact support, applying a similar argument as in \cite[pp.32-33]{LOPTT} and Fatou's lemma, one gets the desired result.

$(a2)\Rightarrow (a3)$ is obvious.

$(a3)\Rightarrow (a1)$: Let $z_{0}\in \mathbb{R}^n$ such that $|(z_{0},z_{0})|>2\sqrt{n}$ and let $\delta\in (0,1)$ small enough. Take $B=B\big((z_{0},z_{0}),\delta\sqrt{2n}\big)\subset \mathbb{R}^{2n}$ be the ball for which we can express $\frac{1}{K}$ as an absolutely convergent Fourier series of the form
$$\frac{1}{K(y_{1},y_{2})}=\sum_{j}a_{j}e^{iv_{j}\cdot(y_{1},y_{2})}, \quad (y_{1},y_{2})\in B,$$
with $\sum_{j}|a_{j}|<\infty$ and we do not care about the vectors $v_{j}\in \mathbb{R}^{2n},$ but we will at times express them as $v_{j}=(v_{j}^{1},v_{j}^{2})\in \mathbb{R}^{n}\times \mathbb{R}^n.$

Set $z_{1}=\delta^{-1}z_{0}$ and note that
$$\big(|y_{1}-z_{1}|^{2}+|y_{2}-z_{1}|^{2}\big)^{1/2}<\sqrt{2n}\Rightarrow \big(|\delta y_{1}-z_{0}|^{2}+|\delta y_{2}-z_{0}|^{2}\big)^{1/2}<\delta \sqrt{2n}.$$
Then for any $(y_{1},y_{2})$ satisfying the inequality on the left, we have
$$\frac{1}{K(y_{1},y_{2})}=\frac{\delta^{-2n}}{K(\delta y_{1},\delta y_{2})}=\delta^{-2n}\sum_{j}a_{j}e^{i\delta v_{j}\cdot(y_{1},y_{2})}.$$

Let $Q=Q(x_{0},r)$ be any arbitrary cube in $\mathbb{R}^n$. Set $\tilde{z}=x_{0}+rz_{1}$ and take $Q'=Q(\tilde{z},r)\subset \mathbb{R}^n$. So for any $x\in Q$ and $y_{1},y_{2}\in Q'$, we have
$$\Big|\frac{x-y_{1}}{r}-z_{1}\Big|\leq \Big|\frac{x-x_{0}}{r}\Big|+\Big|\frac{y_{1}-\tilde{z}}{r}\Big|\leq \sqrt{n},
\quad \Big|\frac{x-y_{2}}{r}-z_{1}\Big|\leq \Big|\frac{x-x_{0}}{r}\Big|+\Big|\frac{y_{2}-\tilde{z}}{r}\Big|\leq \sqrt{n},$$
which implies that
$$\bigg(\Big|\frac{x-y_{1}}{r}-z_{1}\Big|^{2}+\Big|\frac{x-y_{2}}{r}-z_{1}\Big|^{2}\bigg)^{1/2}\leq \sqrt{2n}.$$
\vspace{0.3cm}

Let $s(x)=\overline{\mathrm{sgn}(\int_{Q'}(b(x)-b(y))dy)}$. Then
\begin{eqnarray}\label{eq4}
\begin{split}
|b(x)-b_{Q'}|&=s(x)\big(b(x)-b_{Q'}\big)\\
&=\frac{s(x)}{|Q'|^{2}}\int_{Q'}\int_{Q'}\big(b(x)-b(y_{1})\big)\big)dy_{1}dy_{2}\\
\end{split}
\end{eqnarray}
Setting
$$g_{j}(y_{1})=e^{-i\frac{\delta}{r}v^{1}_{j}\cdot y_{1}}\chi_{Q'}(y_{1}),$$
$$h_{j}(y_{2})=e^{-i\frac{\delta}{r}v^{2}_{j}\cdot y_{2}}\chi_{Q'}(y_{2}),$$
$$m_{j}(x)=e^{i\frac{\delta}{r}v_{j}\cdot (x,x)}\chi_{Q}(x)s(x),$$
which shows that
\begin{eqnarray*}
|b(x)-b_{Q'}|&=& s(x)\frac{r^{2n}\delta^{-2n}}{|Q'|^{2}}
\int_{Q'}\int_{Q'}\frac{b(x)-b(y_{1})}
{\big(|x-y_{1}|^{2}+|x-y_{2}|^{2}\big)^{n-\alpha/2}}\\
&&\qquad\times\sum_{j}a_{j}e^{i\frac{\delta}{r}v_{j}\cdot(x-y_{1},x-y_{2})}dy_{1}dy_{2}\\
&&=\sum_{j}a_{j}[b,T]_{1}(g_{j},h_{j})(x)m_{j}(x).
\end{eqnarray*}

If $p>1$, we have the following estimate
\begin{eqnarray*}
&&\frac{\lambda}{\omega(Q)^{1/p}}\omega\big(x\in Q:\frac{|b(x)-b_{Q'}|}{\omega(x)}>\lambda\big)^{1/p}\\
&&=\frac{\lambda}{\omega(Q)^{1/p}}\omega\big(x\in Q:\frac{|b(x)-b_{Q'}|}{\omega(x)}>\lambda\big)^{1/p}\\
&&\leq \frac{\lambda}{\omega(Q)^{1/p}}\omega\big(x\in Q:\frac{\sum_{j}|a_{j}|\big|[b,T]_{1}(g_{j},h_{j})(x)\big|}{\omega(x)}>\lambda\big)^{1/p}\\
&&\leq \frac{C}{\omega(Q)^{1/p}}\sum_{j}|a_{j}|\|[b,T]_{1}(g_{j},h_{j})\|_{L^{p,\infty}(\omega)}\\
&&\leq C\sum_{j}|a_{j}|.
\end{eqnarray*}
We write
$$\|b\|_{{\rm BMO}_{*}(\omega)}:=\sup_{Q}\sup_{\lambda>0}\frac{\lambda}{\omega(Q)^{1/p}}\omega\big(x\in Q:\frac{|b(x)-b_{Q'}|}{\omega(x)}>\lambda\big)^{1/p},$$
then $\|b\|_{{\rm BMO}_{*}(\omega)}\leq C\sum_{j}|a_{j}|$. The same estimate as lemma \ref{lem1}, we conclude that
\begin{eqnarray*}
|b_{Q}-b_{Q'}|&\leq& \frac{1}{|Q|}\int_{Q}|b(x)-b_{Q'}|dx\\
&\leq& \frac{\omega(Q)}{|Q|}\|b\|_{{\rm BMO}_{*}(\omega)}\\
&\leq& C\frac{\omega(Q)}{|Q|}\sum_{j}|a_{j}|.
\end{eqnarray*}
By the definition of $A_{1}$ weights, we concluded that $\omega(Q)\leq |Q|\omega(x)$, which implies that for any cube $Q$ and $\lambda>0$,
\begin{eqnarray*}
&&\frac{\lambda}{\omega(Q)^{1/p}}\omega\big(x\in Q:\frac{|b(x)-b_{Q}|}{\omega(x)}>\lambda\big)^{1/p}\\
&&\leq\frac{\lambda}{\omega(Q)^{1/p}}\omega\big(x\in Q:\frac{|b(x)-b_{Q'}|}{\omega(x)}>\frac{\lambda}{2}\big)^{1/p}\\
&&\quad+\frac{\lambda}{\omega(Q)^{1/p}}\omega\big(x\in Q:\frac{|b_{Q'}-b_{Q}|}{\omega(x)}>\frac{\lambda}{2}\big)^{1/p}\\
&&\leq C\sum_{j}|a_{j}|+\frac{\lambda}{\omega(Q)^{1/p}}\omega\big(x\in Q:\frac{C\sum_{j}|a_{j}|\omega(Q)}{|Q|\omega(x)}>\frac{\lambda}{2}\big)^{1/p}\\
&&\leq C\sum_{j}|a_{j}|.
\end{eqnarray*}
This shows that $b\in {\rm BMO}_{X}(\omega)$ with $X=L^{p,\infty}$; that is, the symbol $b$ belongs to ${\rm BMO}(\omega)$.

If $p\leq 1$, choose $q\in (0,p)$. By the fact that $L^{p,\infty}(\omega)\subset M^{p}_{q}(\omega)$ in \cite[Corollary 2.3]{WZC} (see also \cite[Lemma 1.7]{KY} for the unweighted case), $M^{p}_{q}(\omega)$ stands for the weighted Morrey spaces; that is, for $0<q<p<\infty$,
$$M^{p}_{q}(\omega)=\bigg\{f\in L^{q_{1}}_{\it loc}:\|f\|_{M^{p}_{q}}=\sup_{Q}\frac{1}{\omega(Q)^{1/q-1/p}}\Big(\int_{Q}|f(y)|^{q}\omega(y) dy\Big)^{1/q}<\infty\bigg\}.$$
Therefore,
\begin{eqnarray*}
&&\inf_{c}\bigg(\frac{1}{\omega(Q)}\int_{Q}\Big(\frac{|b(x)-c|}{\omega(x)}\Big)^{q}\omega(x)dx\bigg)^{1/q}\\
&&\leq\bigg(\frac{1}{\omega(Q)}\int_{Q}\Big(\frac{|b(x)-b_{Q'}|}{\omega(x)}\Big)^{q}\omega(x)dx\bigg)^{1/q}\\
&&\leq \bigg(\frac{C}{\omega(Q)}\int_{Q}\Big|\sum_{j}|a_{j}|[b, T]_{1}(g_{j},h_{j})(x)\omega(x)^{-1}\Big|^{q}\omega(x)dx\bigg)^{1/q}\\
&&\leq C\omega(Q)^{-1/p}\sum_{j}|a_{j}|\|[b, T]_{1}(g_{j},h_{j})\omega^{-1}\|_{M^{p}_{q}(\omega)}\\
&&\leq C\omega(Q)^{-1/p}\sum_{j}|a_{j}|\|[b, T]_{1}(g_{j},h_{j})\omega^{-1}\|_{L^{p,\infty}(\omega)}\\
&&\leq C\sum_{j}|a_{j}|.
\end{eqnarray*}
Thus showing that $b\in {\rm BMO}_{q}(\omega)$. The desired result follows from here.

By the inequality \eqref{eq3} in lemma \ref{lem3} and the same argument as $(a1)\Rightarrow (a2)$, we can obtain that $(a1)\Rightarrow (a4)$. It is easy to see that $(a4)\Rightarrow (a5)$. The proof of $(a5)\Rightarrow (a1)$ follows the method that of $(a3)\Rightarrow (a1)$ except replacing \eqref{eq4} by
\begin{eqnarray*}
|b(x)-b_{Q'}|^{2}&=&s(x)^{2}\big(b(x)-b_{Q'}\big)\big(b(x)-b_{Q'}\big)\\
&=&\frac{s(x)^{2}}{|Q'|^{2}}\int_{Q'}\int_{Q'}\big(b(x)-b(y_{1})\big)\big(b(x)-b(y_{2})\big)dy_{1}dy_{2}.
\end{eqnarray*}

Therefore, we complete the proof of Theorem \ref{t1}. \qed

%%%%%%%%%%%%%%%%%%%%%%%%%%%%%%%%%%%%%%%%%%%%%%%%%%%%%%%%%%%%%%%%%%%%%
%%%%%%%%%%%%%%%%%%%%%%%%%%%%%%%%%%%%%%%%%%%%%%%%%%%%%%%%%%%%%%%%%%%%%%
\color{black}

%\end{CJK}

\begin{thebibliography}{99}
\bibitem{B}
Bloom, S.: A commutator theorem and weighted BMO, {\em Trans. Amer. Math. Soc.} {\bf292}, 103-122(1985).

\bibitem{C}
Chaffee, L.: Characterizations of BMO through commutators of bilinear singular integral operators, submitted
(2014) available at arXiv:1410.4587.

\bibitem{Ch}
Chanillo, S.: A note on commutators, {\em Indiana Univ. Math. J.} {\bf 31}, 7-16(1982).

\bibitem{CRW}
Coifman, R., Rochberg, R., Weiss, G.: Factorization theorems for Hardy
spaces in several variables, \emph{Ann. of Math.} {\bf103}, 611-635 (1976).


\bibitem{FS}
Fefferman, C., Stein, E.M.: $H^{p}$ spaces of several variables, {\em Acta Math.} {\bf 129}, 137-193(1972).

\bibitem{G}
Garc\'{i}a-Cuerva, J.: Hardy spaces and Beurling algebras, \emph{J. Lond. Math. Soc.} {\bf39}, 499-513 (1989).

\bibitem{LOPTT}
Lerner, A.K., Ombrosi, S., P\'{e}rez, C., Torres, R.H., Trujillo-Gonz\'{a}lez, R.: New maximal functions and multiple weights for the multilinear Calder\'{o}n-Zygmund theory. \emph{Adv. Math.} {\bf220}, 1222-1264 (2009)

\bibitem{J}
Janson, S.: Mean oscillation and commutators of singular integral operators, {\em Ark. Math.} {\bf 16}, 263-270(1978).

\bibitem{JN}
John, F., Nirenberg, L.: On functions of bounded mean oscillation, \emph{Comm Pure Appl. Math.} {\bf 2}, 415-426 (1961).

\bibitem{KL1}
Krantz, S., Li, S.-Y.: Boundedness and compactness of integral operators on spaces of homogeneous
type and applications, I. J. Math. Anal. Appl. {\bf 258}, 629-641 (2001)

\bibitem{KL2}
Krantz, S., Li, S.-Y.: Boundedness and compactness of integral operators on spaces of homogeneous
type and applications, II. J. Math. Anal. Appl. {\bf 258}, 642-657 (2001)

\bibitem{KY}
Kozono, H., Yamazaki, M.: Semilinear heat equations and the Navier-Stokes equation with
distributions in new function spaces as initial data, \emph{Comm. Partial Differential Equations} {\bf 19}, 959-1014 (1994).

\bibitem{M}
Muckenhoupt, B.: Weighted norm inequalities for the Hardy maximal function, {\em Trans. Amer. Math. Soc.} {\bf 165}, 207-226(1972).

\bibitem{S}
Stein, E.M.: Singular integral and differentiability properties of functions. Princeton University
Press, Princeton (1971).

\bibitem{St}
Str\"{o}mberg, J.O.: Bounded mean oscillation with Orlicz norms and duality of Hardy spaces, {\em Indiana U. Math. J.} {\bf 23}, 511-544(1979).

\bibitem{U}
Uchiyama, A.: On the compactness of operators of Hankel type. {\em T\^{o}hoku Math. J.} {\bf 30}, 163-171(1978).

\bibitem{WZC}
Wang, D.H., Zhou, J., Chen, W.Y.: Another characterizations of Muckenhoupt $A_p$ class, submitted (2016) available at arXiv:1611.05965 (to appear in Acta Math. Sci. (Series B)).

\bibitem{WJP}
Wang, S.B, Jiang, Y.S., Pan, J.B.: Necessary and sufficient conditions for boundedness of commutators of multilinear fractional integral operators, {\em Acta Math. Sci. (Series A)}, {\bf 35}, 1106-1114(2015).
\end{thebibliography}
\end{document}